\def\JPicScale{0.8}\fi
\theoremstyle{plain}
   \newtheorem{theorem}{Theorem}[section]
   \newtheorem{proposition}[theorem]{Proposition}     
   \newtheorem{lemma}[theorem]{Lemma}
   \newtheorem{corollary}[theorem]{Corollary}
\theoremstyle{definition}
   \newtheorem{definition}[theorem]{Definition}
   \newtheorem{remark}[theorem]{Remark}
\newcommand{\quotient}{/\hspace{-1.2mm}/}
\numberwithin{theorem}{section}
\begin{document}
\title{GIT Compactifications of $\mathcal{M}_{0,n}$ from Conics}
\author{Noah Giansiracusa, Matthew Simpson}
\maketitle

\begin{abstract}
We study GIT quotients parametrizing n-pointed conics that generalize the GIT quotients $(\mathbb{P}^1)^n/\hspace{-0.8mm}/\mathtt{SL}_2$. Our main result is that $\overline{\mathcal{M}}_{0,n}$ admits a morphism to each such GIT quotient, analogous to the well-known result of Kapranov for the simpler $(\mathbb{P}^1)^n$ quotients. Moreover, these morphisms factor through Hassett's moduli space of weighted pointed rational curves, where the weight data comes from the GIT linearization data.
\end{abstract}

\tableofcontents

\section{Introduction and main results}

Inspired by recent work of the second author \cite{Simp}, we study a family of GIT quotients parametrizing $n$-pointed conics that generalize the GIT quotients $(\mathbb{P}^1)^n\quotient\mathtt{SL}_2$.  These latter quotients compactify the moduli space $\mathcal{M}_{0,n}$ of nonsingular $n$-pointed rational curves by allowing points to collide as long as their \emph{weight} (a number assigned to each point when choosing a linearization for the group action) is not too much.  For the GIT quotients that we investigate, denoted $\mathtt{Con}(n)\quotient\mathtt{SL}_3$, the compactification allows a certain number of points to overlap based on their weights, but if too many points collide then the nonsingular conic degenerates into a nodal conic.  Up to isomorphism nonsingular and nodal conics are a $\mathbb{P}^1$ and a pair of intersecting $\mathbb{P}^1$s, respectively, so the spaces $\mathtt{Con}(n)\quotient\mathtt{SL}_3$ can be viewed as intermediate compactifications between $(\mathbb{P}^1)^n\quotient\mathtt{SL}_2$ and the well-known Deligne-Mumford-Knudsen compactification $\overline{\mathcal{M}}_{0,n}$ \cite{Knud}.  

The following theorem characterizes GIT stability for pointed conics, generalizing a result of the second author (Theorem 3.1.5 in \cite{Simp}) which describes stability in the special case of $S_n$ invariant weights.

\begin{theorem} \label{thm:stability} Let $(\gamma,c_1,\ldots,c_n)$ specify an ample fractional line bundle on the space of $n$-pointed conics $\mathtt{Con}(n)\subset \mathbb{P}(\mathtt{Sym}^2(V^*))\times(\mathbb{P}(V))^n$, $V=\mathbb{C}^3$, linearized for the natural action of $\mathtt{SL}(V)$.  If $c := c_1 +\cdots + c_n$ is the total point weight then: 
\begin{itemize}
	\item all non-reduced conics are unstable
	\item a nodal conic is semistable iff 
	\begin{enumerate} 
		\item the weight of marked points at any smooth point is $\le \frac{c+\gamma}{3}$ 
		\item the weight of marked points at the node is $\le c - 2(\frac{c+\gamma}{3})$, and 
		\item the weight on each component is $\le \frac{2c-\gamma}{3}$; equivalently, the weight on each component away from the node is $\ge \frac{c+\gamma}{3}$
	\end{enumerate}
	\item a nonsingular conic is semistable iff the weight at each point is $\le \mathtt{min}\{\frac{c+\gamma}{3},\frac{c}{2}\}$
\end{itemize}
In particular, if $\gamma > \frac{c}{2}$ then nodal conics are unstable.  Stability is characterized by the corresponding inequalities being replaced by strict inequalities. 
\end{theorem}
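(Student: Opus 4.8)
The plan is to run the Hilbert--Mumford numerical criterion. Since every one-parameter subgroup (1-PS) of $\mathtt{SL}(V)$ is conjugate to a diagonal one and $\mu(g\cdot x,\lambda)=\mu(x,g^{-1}\lambda g)$, I would fix the diagonal maximal torus and work with $\lambda(t)=\mathrm{diag}(t^{a_1},t^{a_2},t^{a_3})$, $a_1+a_2+a_3=0$. In the standard basis $e_1,e_2,e_3$ of $V$ with dual $x,y,z$, the monomial $e_i^{*}e_j^{*}$ of $\mathtt{Sym}^2(V^*)$ has $\lambda$-weight $-a_i-a_j$, so for the linearization $(\gamma,c_1,\dots,c_n)$ the Hilbert--Mumford function is additive,
\[
\mu\big((Q,p_1,\dots,p_n),\lambda\big)=\gamma\,\max\{a_i+a_j:\ q_{ij}\neq 0\}+\sum_{k=1}^{n}c_k\big(-\min\{a_i:\ (p_k)_i\neq 0\}\big),
\]
with the convention that semistability means $\mu\ge 0$ for all such $\lambda$ (stability being strict inequality). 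Because the $\mathtt{SL}(V)$-orbits on $\mathbb{P}(\mathtt{Sym}^2(V^*))$ are exactly the three rank strata, I would put $Q$ into one of the normal forms $xz-y^2$ (smooth), $xy$ (nodal, node $[0:0:1]$, components $\{x=0\}$ and $\{y=0\}$), or $x^2$ (non-reduced), keep the marked points arbitrary, and test all diagonal $\lambda$.

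The function $\lambda\mapsto\mu(x,\lambda)$ is convex, piecewise-linear and positively homogeneous on the weight space $\{\sum a_i=0\}$, the linear pieces being the cones on which the maximizing monomial of $Q$ and the minimizing coordinate of each $p_k$ are fixed. Its minimum is therefore attained along the rays of this polyhedral decomposition, i.e. along 1-PS \emph{adapted} to the configuration: a coordinate line $L\subset V$ through a marked point or through the node (type $(2,-1,-1)$), a coordinate plane $P\subset V$ along a component (type $(1,1,-2)$), or the wall where the weight of $Q$ jumps. For each such ray $\mu$ depends only on how $Q$ meets $L$ or $P$ and on the total marked-point weight supported on $L$ or $P$, so verification reduces to a finite optimization over the choices of $L$ and $P$.

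Carrying this out on the nodal conic $xy$: the line ray $(2,-1,-1)$ with $L$ a smooth point gives $\mu=t(c+\gamma-3w)$, hence condition (1); the same line ray with $L$ the node gives $\mu=t(c-2\gamma-3m)$, hence condition (2) (note $\tfrac{c-2\gamma}{3}=c-2\cdot\tfrac{c+\gamma}{3}$); and the plane ray $(1,1,-2)$ with $P$ a component gives $\mu=t(2c-\gamma-3W)$, hence condition (3) and its stated equivalent. For the smooth conic the line ray $(2,-1,-1)$ yields $w\le\tfrac{c+\gamma}{3}$, while the conic-preserving $\mathtt{SL}_2$-subgroup $(2,0,-2)$ (which sits on the wall where the weight of $Q$ vanishes) reproduces the classical $(\mathbb{P}^1)^n$ bound $w\le\tfrac{c}{2}$; the binding constraint is the stated minimum. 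For the non-reduced conic $x^2$ every marked point lies on $\{x=0\}$, so the 1-PS $(-2,1,1)$ gives $\mu=-4\gamma-c<0$ regardless of the points, proving instability. The final claim that $\gamma>\tfrac{c}{2}$ forces nodal conics unstable is then immediate from condition (2), since $\tfrac{c-2\gamma}{3}<0$, and replacing $\ge$ by $>$ throughout yields the stability statements.

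The hard part will be the optimization of the second paragraph: proving rigorously that the finitely many adapted rays dominate every 1-PS, and then bookkeeping, uniformly across the position of the node, the distribution of points among the components, and the degeneration from smooth to nodal, exactly how much point weight and conic weight each adapted flag detects. It is this accounting that converts the three model computations into the precise inequalities (1)--(3) and their sharp constants, and separating semistable from stable requires tracking which configurations make some $\mu$ vanish.
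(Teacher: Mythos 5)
Your framework is the same as the paper's: both proofs run the Hilbert--Mumford criterion on diagonal one-parameter subgroups, exploit linearity of the weight in the exponent data to reduce to finitely many critical rays (your rays $(2,-1,-1)$, $(1,1,-2)$, $(1,0,-1)$ are, up to sign and permutation, the paper's endpoints $b=-2$, $b=-\tfrac12$ and midpoint $b=-1$ of its normalized family $\mathrm{diag}(t^b,t^{-1-b},t)$), and your five model computations -- smooth point, node, component, the two smooth-conic bounds, and the instability of $x^2$ -- are all correct and recover exactly the stated constants. So the necessity direction of each ``iff'' is in good shape.

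The gap is the sufficiency direction, which you explicitly set aside as ``the hard part'' and which is where most of the paper's proof actually lives. The issue is not just bookkeeping: your stated plan -- put $Q$ in normal form, keep the points arbitrary, and test all \emph{diagonal} $\lambda$ -- does not verify semistability, because a general one-parameter subgroup is $g\lambda_0 g^{-1}$ with $g$ outside the stabilizer of the normal form, so its associated flag $L\subset P$ sits in general position relative to the conic (e.g.\ $L$ a point off the conic, or a smooth point with $P$ the tangent line there). Equivalently, one must check the ordered diagonal family against \emph{every} conic in the orbit, not just the normal form. This is precisely the paper's case analysis: for nodal conics it splits according to whether $[0:0:1]$ is the node or a smooth point of the (arbitrary-position) conic and uses that a nodal conic cannot lie in the span of $\{x^2\}$ resp.\ $\{x^2,xy,y^2\}$ to bound the conic term; for smooth conics it must treat separately the lines tangent to the conic, and there a single coordinate no longer has weight $\le 0$ across all of $[-2,-\tfrac12]$, forcing a subdivision at $b=-1$ with different witnessing monomials ($y^2$ on $[-2,-1]$ and $xz$ on $[-1,-\tfrac12]$). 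None of the smooth-conic sufficiency argument, in particular the tangent-line case where the bound $\frac{c}{2}$ rather than $\frac{c+\gamma}{3}$ is the binding one, is addressed by your two model computations. Until that verification is carried out over all flag positions, you have only shown the conditions are necessary, not that they characterize semistability.
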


A variation of GIT perspective will be useful in our investigations.  When a reductive group $G$ acts on a variety the space of linearized fractional polarizations forms a cone called the \emph{$G$-ample cone}, and inside it sits the \emph{$G$-effective cone} which is defined as the set of linearizations for which the semistable locus is nonempty.  The $G$-effective cone admits a finite wall and chamber decomposition such that on each open chamber the GIT quotient is constant and when a wall is crossed the quotient undergoes a birational modification (see \cite{Thad} and \cite{DH}).  In some cases this cone admits a natural cross-section so that the (closure of) the space of linearizations can be identified with a certain polytope which we call the \emph{linearization polytope}.  

For example, the GIT quotients $(\mathbb{P}^m)^n\quotient\mathtt{SL}_{m+1}$ parametrizing configurations of $n$ points in $m$-dimensional projective space have $\mathtt{SL}_{m+1}$-ample cone $\mathbb{Q}_{>0}^n$ because $\text{Pic}((\mathbb{P}^m)^n) \cong \mathbb{Z}^n$ and each line bundle admits a unique linearization.   A vector $\vec{c}=(c_1,\ldots,c_n)\in\mathbb{Q}_{>0}^n$ assigns a positive rational weight to each point, and a configuration is semistable if and only if the total weight lying in any proper linear subspace $W\subset\mathbb{P}^m$ is at most $\frac{\text{dim }W+1}{m+1}\cdot \sum_1^n c_i$ (see, e.g., Example 3.3.21 in \cite{DH}).  Multiplying $\vec{c}$ by a positive constant does not affect stability so one can use the normalization $\sum_1^n c_i = m+1$.  The semistable locus is then non-empty precisely when $\text{max}\{c_i\} \le 1$ so the linearization polytope is a hypersimplex \[\Delta(m+1,n) = \{\vec{c}\in\mathbb{Q}^{n}~|~0\le c_i \le 1, \sum_{i=1}^n c_i = m+1\}\] with walls of the form $\sum_{i\in I} c_i=k$ for $I\subset\{1,\ldots,n\}$ and $1 \le k \le m$.

In particular, for points on the line ($m=1$) we have $\Delta(2,n)$ with walls $\sum c_i = 1$, and for points in the plane ($m=2$) we have $\Delta(3,n)$ with walls $\sum c_i =1$ and $\sum c_i = 2$.  A consequence of Theorem \ref{thm:stability} is that for the space of $n$-pointed conics the effective linearizations form a 1-parameter family of hypersimplices that interpolate these two cases. 

\begin{corollary} \label{cor:chamber} The $\mathtt{SL}_3$-effective cone for $\mathtt{Con}(n)$ induced from that of the ambient $\mathbb{P}^5\times(\mathbb{P}^2)^n$ is subdivided by the hyperplane $\gamma = \frac{c}{2}$ into two subcones: $\gamma \le \frac{c}{2}$ for which semistable nodal conics occur, and $\gamma > \frac{c}{2}$ for which singular conics are unstable.  When normalizing with cross-sections $\gamma + c = 3$ on the former and $c=2$ on the latter the linearization polytopes for fixed $\gamma$ are $\Delta(3-\gamma,n)$ with walls $\sum c_i =1$ and $\sum c_i=2$ if $0\le\gamma\le 1$, and $\Delta(2,n)$ with walls $\sum c_i = 1$ if $\gamma \ge 1$.  These cross-sections meet at the codimension 2 linear subspace $\gamma=1,c=2$ and only miss the ray $\gamma \ne 0$, $c=0$.
\end{corollary}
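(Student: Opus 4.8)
The plan is to read everything off from Theorem~\ref{thm:stability} together with the elementary structure of the ambient ample cone. First I would record that $\text{Pic}(\mathbb{P}^5\times(\mathbb{P}^2)^n)\cong\mathbb{Z}^{n+1}$ and that $\mathtt{SL}_3$ is semisimple, hence has no nontrivial characters, so every line bundle carries a unique linearization; thus the $\mathtt{SL}_3$-ample cone of the ambient space is the positive orthant $\mathbb{Q}_{>0}^{n+1}$ with coordinates $(\gamma,c_1,\dots,c_n)$, and these linearizations restrict to the closed subvariety $\mathtt{Con}(n)$. The $\mathtt{SL}_3$-effective cone is the locus where the restricted semistable locus is nonempty, which I will identify using the three bullet points of Theorem~\ref{thm:stability}. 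The subdivision by the hyperplane $\gamma=\frac{c}{2}$ is then immediate: the ``in particular'' clause of the theorem gives that singular conics are unstable when $\gamma>\frac{c}{2}$, while for $\gamma\le\frac{c}{2}$ one checks directly that the nodal semistability inequalities admit solutions — e.g. for $n$ large place no points at the node and balance the remaining weight $c$ evenly between the two components — so semistable nodal conics occur.

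Next I would compute each cross-section. On the subcone $\gamma>\frac{c}{2}$ only nonsingular conics can be semistable, and since $\frac{c+\gamma}{3}>\frac{c}{2}$ the relevant bound is $\min\{\frac{c+\gamma}{3},\frac{c}{2}\}=\frac{c}{2}$; a configuration with all points at distinct locations is semistable precisely when each $c_i\le\frac{c}{2}$, and this is also necessary since a single point of weight $>\frac{c}{2}$ violates the bound at its location. As stability is scale-invariant I normalize by $c=2$, giving $\{0\le c_i\le1,\ \sum c_i=2\}=\Delta(2,n)$ with the single wall type $\sum_{i\in I}c_i=1$ coming from the boundary of the point-weight bound. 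On the subcone $\gamma\le\frac{c}{2}$ the bound becomes $\frac{c+\gamma}{3}$; effectivity via distinct points on a nonsingular conic holds iff each $c_i\le\frac{c+\gamma}{3}$, and I must check this is also necessary for nodal conics — but any point of weight exceeding $\frac{c+\gamma}{3}$ fails Theorem~\ref{thm:stability}(1) at a smooth point and, since $\frac{c-2\gamma}{3}\le\frac{c+\gamma}{3}$, also fails (2) at the node, so no conic of either type is semistable. Normalizing by $c+\gamma=3$ makes $\frac{c+\gamma}{3}=1$, so the cross-section is $\{0\le c_i\le1,\ \sum c_i=3-\gamma\}=\Delta(3-\gamma,n)$, valid for $0\le\gamma\le1$ because $\gamma\le\frac{c}{2}=\frac{3-\gamma}{2}$.

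The wall structure in this case is the one step requiring care, and is obtained by rewriting the three nodal inequalities under the normalization $\frac{c+\gamma}{3}=1$, $c=3-\gamma$. Condition (1) places a wall when the weight $\sum_{i\in I}c_i$ at a smooth point equals $1$; condition (3), in the form ``weight on a component $\le\frac{2c-\gamma}{3}=2-\gamma$'', gives a wall $\sum_{i\in I}c_i=2-\gamma$ whose complementary weight is $(3-\gamma)-(2-\gamma)=1$, again a wall $\sum c_i=1$; and condition (2), ``node weight $\le c-2=1-\gamma$'', gives a wall $\sum_{i\in I}c_i=1-\gamma$ with complementary weight $(3-\gamma)-(1-\gamma)=2$, i.e. $\sum_{i\in I}c_i=2$. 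Thus the GIT walls are exactly $\sum_{i\in I}c_i=1$ and $\sum_{i\in I}c_i=2$, those of the hypersimplex $\Delta(3,n)$ for planar configurations, the second present only when $2\le c=3-\gamma$, i.e. $\gamma\le1$.

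Finally I would verify the incidence claims. The two normalization hyperplanes $\{c+\gamma=3\}$ and $\{c=2\}$ meet along $\{c=2,\ \gamma=1\}$, a codimension-$2$ linear subspace, consistent with the subdivision since $\gamma=\frac{c}{2}$ forces $c=2,\ \gamma=1$ after either normalization. For the covering statement, any ray of the effective cone with $c>0$ scales onto the appropriate cross-section (by $3/(\gamma+c)$ when $\gamma\le\frac{c}{2}$, by $2/c$ when $\gamma>\frac{c}{2}$), whereas the ray $\{c=0,\ \gamma>0\}$ — which is effective because smooth conics are $\mathtt{SL}_3$-stable when all weight sits on the conic factor — cannot be rescaled into $\{c=2\}$ and is therefore the unique missed ray. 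I expect the genuine obstacle to be the necessity direction of the effectivity computation for $\gamma\le\frac{c}{2}$ (ruling out that admitting nodal conics enlarges the cone) and the complementation bookkeeping producing the $\sum c_i=2$ walls; the remainder is a direct translation of Theorem~\ref{thm:stability}.
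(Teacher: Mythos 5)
Your proposal is correct and follows essentially the same route as the paper: read the effective cone off the smooth-point bound $\min\{\frac{c+\gamma}{3},\frac{c}{2}\}$ from Theorem \ref{thm:stability}, normalize by $c+\gamma=3$ or $c=2$ according to the sign of $\gamma-\frac{c}{2}$, and obtain the $\sum c_i=2$ walls by complementing the node-weight condition. You are somewhat more explicit than the paper on the necessity direction of the effectivity computation and on the missed ray $c=0$, but these are elaborations of the same argument rather than a different approach.
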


\begin{remark} If $\gamma > \frac{c}{2}$ then, as we discuss later, $\mathtt{Con}(n)\quotient_{(\gamma,\vec{c})}\mathtt{SL}_3 \cong (\mathbb{P}^1)^n\quotient_{\vec{c}}~\mathtt{SL}_2$ so to get novel compactifications we can restrict $\gamma$ to the interval $[0,\frac{c}{2}]$.  In this case only one normalization is necessary (namely $\gamma+c=3$) and the linearization polytope is $\Delta(3,n+1)$ with walls $\sum c_i=1$ and $\sum c_i = 2$ which are ``vertical'' in the sense that they are independent of $\gamma$.
\end{remark}

Another interesting family of compactifications is provided by Hassett's moduli spaces of stable weighted pointed curves \cite{Hass}.  Recall that for a weight vector $\vec{c}\in[0,1]^n$ the space $\overline{\mathcal{M}}_{0,\vec{c}}$ parametrizes nodal rational curves with marked points $p_i$ avoiding the nodes such that on any component $C$ we have $\sum_{p_i \in C} c_i + \delta_C > 2$, where $\delta_C$ is the number of nodes on $C$.  In particular, if $c_i=1$ for $1\le i \le n$ then $\overline{\mathcal{M}}_{0,\vec{c}}\cong \overline{\mathcal{M}}_{0,n}$ so these spaces can also be viewed as intermediate compactifications of $\mathcal{M}_{0,n}\subset \overline{\mathcal{M}}_{0,n}$.  The main result of this paper (generalizing the $S_n$ invariant result Theorem 3.2.6 in \cite{Simp}) is that these Hassett compactifications and our conic compactifications are related in the following manner.

\begin{theorem} \label{thm:contraction}
For any $(\gamma,\vec{c})\in\Delta(3,n+1)$ such that all entries of $\vec{c}$ are nonzero there is a birational contraction morphism \[\overline{\mathcal{M}}_{0,n} \rightarrow \mathtt{Con}(n)\quotient_{(\gamma,\vec{c})}\mathtt{SL}_3\] which factors through $\overline{\mathcal{M}}_{0,\vec{c}}$.
\end{theorem}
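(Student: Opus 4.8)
My plan is to realize the map as a composite
$\overline{\mathcal{M}}_{0,n}\xrightarrow{\rho}\overline{\mathcal{M}}_{0,\vec{c}}\to\mathtt{Con}(n)\quotient_{(\gamma,\vec{c})}\mathtt{SL}_3$,
where the first arrow is the Hassett reduction morphism and the second is built from an explicit family of semistable pointed conics. Since $(\gamma,\vec{c})\in\Delta(3,n+1)$ with every $c_i>0$ we have $0<c_i\le 1$ and $\sum_i c_i=3-\gamma\in[2,3]$, so Hassett's theory \cite{Hass} supplies the reduction morphism $\rho$ obtained by lowering the weights from $1$ to $\vec{c}$; this is what will give the required factorization once the second arrow is in place. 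The first thing I would prove is a structural fact that makes the whole construction possible: for these weights every curve parametrized by $\overline{\mathcal{M}}_{0,\vec{c}}$ is a \emph{chain} of $\mathbb{P}^1$'s. Indeed, a dual-graph vertex of valence $k\ge 3$ would produce $k$ distinct leaves, and Hassett stability forces each leaf component to carry marked weight $>1$; the $k$ disjoint leaves would then contribute total weight $>k\ge 3$, contradicting $\sum_i c_i=3-\gamma\le 3$. Hence no component has more than two nodes.

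To construct the second arrow I would push forward the relative anti-dualizing sheaf. Let $\pi\colon\mathcal{U}\to\overline{\mathcal{M}}_{0,\vec{c}}$ be the universal curve with sections $\sigma_1,\dots,\sigma_n$, and set $\mathcal{L}:=\omega_\pi^{-1}$. On any fiber $C$, a genus-$0$ chain, $\mathcal{L}$ restricts to $\mathcal{O}(2-\delta_j)$ on the $j$-th component ($\delta_j\le 2$ its number of nodes), so $\mathcal{L}$ is nef of total degree $2$, has degree $1$ on the two end components and $0$ on the interior ones, and satisfies $h^0(C,\mathcal{L})=3$, $h^1(C,\mathcal{L})=0$. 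Cohomology and base change then makes $\pi_*\mathcal{L}$ locally free of rank $3$ with formation commuting with base change, and the evaluation map embeds $\mathcal{U}$ in the $\mathbb{P}^2$-bundle $\mathbb{P}(\pi_*\mathcal{L})$ as a flat family of conics: a smooth fiber maps by its $2$-uple embedding onto a smooth conic, while a chain collapses its interior components and carries the two ends to a pair of lines meeting in a node. Together with the $\sigma_i$ this is a family of $n$-pointed conics in a $\mathbb{P}^2$-bundle; étale-locally trivializing $\pi_*\mathcal{L}$ gives maps to $\mathtt{Con}(n)^{ss}$ well defined up to $\mathtt{SL}_3$, and by the universal property of the GIT quotient these descend to the desired morphism, provided the conics produced are semistable.

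The step I expect to be the genuine obstacle is verifying this semistability against Theorem \ref{thm:stability}, i.e.\ matching Hassett stability with GIT stability through the contraction. Under the normalization $\gamma+\sum_i c_i=3$ the bounds of Theorem \ref{thm:stability} become: weight $\le 1$ at each smooth point, weight $\le 1-\gamma$ at the node, and weight $\ge 1$ on each component away from the node. The first coincides exactly with the Hassett condition that any colliding set of points has total weight $\le 1$; the third holds because each end of the chain is a Hassett leaf and so carries marked weight $>1$; and the node bound is automatic, since when the two ends carry weights $w_1,w_2>1$ the weight absorbed into the node as the interior of the chain collapses is $(3-\gamma)-(w_1+w_2)<(3-\gamma)-2=1-\gamma$. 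I expect the difficulty here to lie not in any single estimate but in carrying out this weight bookkeeping uniformly over all boundary strata of $\overline{\mathcal{M}}_{0,\vec{c}}$, keeping track of how colliding points and contracted bubbles deposit their weights onto the resulting conic.

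Finally I would check that the composite restricts to the identity on $\mathcal{M}_{0,n}$: an $n$-pointed smooth rational curve maps to its $2$-uple image with the induced marked points, and the identification of smooth pointed conics modulo $\mathtt{SL}_3$ with $\mathcal{M}_{0,n}$ (via the stabilizer $\mathtt{PGL}_2\cong\mathtt{Aut}(\mathbb{P}^1)$ of a smooth conic) shows this is an isomorphism over the interior. The morphism is therefore birational, and being a morphism out of the smooth projective variety $\overline{\mathcal{M}}_{0,n}$ it is automatically a birational contraction. The only points requiring separate attention are the boundary value $\gamma=1$, where $\sum_i c_i=2$ and the target degenerates to $(\mathbb{P}^1)^n\quotient\mathtt{SL}_2$ as in the Remark, and the vanishing-weight locus that the hypothesis $c_i\neq 0$ excludes.
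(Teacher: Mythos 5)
Your proposal follows essentially the same route as the paper: factor through Hassett's reduction morphism, observe that the weights force all $\vec{c}$-stable curves to be chains, use $\omega_\pi^{-1}$ on the universal curve to contract interior components and embed the result as a conic in a $\mathbb{P}^2$-bundle, verify the GIT (semi)stability conditions of Theorem \ref{thm:stability} against the Hassett weight bookkeeping in exactly the way you outline, and descend to the quotient by (local) trivialization of the bundle. The only organizational difference is that the paper first reduces to linearizations in the interior of a GIT chamber via variation of GIT and handles walls by composition, whereas you work with semistability directly; this does not change the substance of the argument.
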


The idea is that a stable $n$-pointed rational curve becomes a stable weighted pointed curve by contracting all components which carry too little weight and then this can be further contracted to at most two components which are then embedded in the plane as a conic stable with respect to the GIT linearization corresponding to the Hassett weight data.  

\begin{remark} This theorem should be thought of as an analogue of the result of Kapranov \cite{Kapr} that $\overline{\mathcal{M}}_{0,n}$ admits a morphism to every GIT quotient $(\mathbb{P}^1)^n\quotient\mathtt{SL}_2$.  In fact, because $\mathtt{Con}(n)\quotient_{(\gamma,\vec{c})}\mathtt{SL}_3 \cong (\mathbb{P}^1)^n\quotient_{\vec{c}}~\mathtt{SL}_2$ for $\gamma > 1$, this theorem when combined with Kapranov's result shows that $\overline{\mathcal{M}}_{0,n}$ admits a morphism to every GIT quotient $\mathtt{Con}(n)\quotient\mathtt{SL}_3$ with linearization induced from the ambient product of projective spaces.
\end{remark}

The remaining sections of this paper are devoted to proving the results described in this introduction, except for the last section in which we explore some examples and further properties.\\

\textbf{Acknowledgements}.  This project was suggested by the second author's thesis adviser, Brendan Hassett, and was guided by helpful discussions with him as well as the first author's thesis advisers, Dan Abramovich and Danny Gillam.  We would like to extend our thanks to them for sharing their time and ideas with us.

\section{Numerical criteria for stability of conics}

The goal of this section is to prove Theorem \ref{thm:stability} by applying the Hilbert-Mumford numerical criterion for stability.  Let us briefly recall how this criterion works (see \cite{GIT} and \cite{News} for details).  Assume that $G$ is a reductive group, $L$ is an ample linearized line bundle, and $X$ is a projective variety over $\mathbb{C}$.  Then $L$ (or a suitable tensor power) induces a morphism $X \rightarrow \mathbb{P}^N$ to the projective space determined by its sections and the linearization gives an action of $G$ on these sections.  In particular, if $\lambda$ is a 1-parameter subgroup, i.e. a homomorphism $\mathbb{G}_m \rightarrow G$, then there is an induced linear action of $\lambda$ on $\mathbb{A}^{N+1}$.  This action can always be diagonalized: there is a basis $e_0,\ldots,e_N$ such that $\lambda(t)\cdot e_i = t^{r_i}e_i$ for some integers $r_i$ which are called the \emph{weights} of the action.  For any point $x\in X$ we look at its image in $\mathbb{P}^N$, choose a point $\widetilde{x}=\sum_{i=0}^N \widetilde{x}_i e_i$ in the affine cone $\mathbb{A}^{N+1}$ lying over it, and then acting on this by $\lambda$ gives an expression of the form $\sum_{i=0}^N t^{r_i}\widetilde{x}_ie_i$.  We also refer to $r_i$ as the weight of the coordinate $\widetilde{x}_i$.  The ``numerical criterion'' says that a point $x$ is semistable if and only if the minimum $\mu$ of the weights of its coordinates over all nonzero coordinates and all 1-parameter subgroups is $\le 0$, and $x$ is stable if and only if $\mu <0$.

We now turn to the specifics of our problem.  We are interested in the space \[\mathtt{Con}(n) \subset \mathbb{P}^5\times(\mathbb{P}^2)^n\] of $n$-pointed conics, where $\mathbb{P}^5 = \mathbb{P}(\mathtt{Sym}^2((\mathbb{C}^3)^*))$.  If we take coordinates $(x_l,y_l,z_l)$ on the $l^{\text{th}}$ copy of $\mathbb{P}^2$ and $a_{ijk}$ ($i+j+k=2$) on the $\mathbb{P}^5$ then $\mathtt{Con}(n)$ is cut out by the equations $\sum_{i+j+k=2}a_{ijk}x_l^iy_l^jz_l^k=0$ for $l=1,\ldots, n$.  The group $\mathtt{SL}_3$ acts as automorphisms on $\mathbb{P}^2$, so it acts on $\mathbb{P}^5$ by sending a conic to its image under the corresponding projective motion.  This induces an action on $\mathtt{Con}(n)$.  Concretely, if \[A = \left(\begin{array}{ccc} a_{200} & \frac{1}{2}a_{110} & \frac{1}{2}a_{101} \\ \frac{1}{2}a_{110} & a_{020} & \frac{1}{2}a_{011} \\ \frac{1}{2}a_{101} & \frac{1}{2}a_{011} & a_{002} \end{array}\right), \vec{x_l} =\left(\begin{array}{c} x_l \\ y_l \\ z_l \end{array} \right)\] are the symmetric bilinear form associated to the conic $a_{ijk}$ and coordinate vectors on $\mathbb{P}^2$, respectively, then the action is given by \[g\cdot (A,\vec{x}_1,\ldots,\vec{x}_n) = ((g^{-1})^tAg^{-1},g\vec{x}_1,\ldots,g\vec{x}_n)\] and the incidence correspondence $\mathtt{Con}(n)$ is defined by $\vec{x}^t_lA\vec{x}_l=0$ for $l=1,\ldots,n$.  The Segre embedding $\mathbb{P}^5\times(\mathbb{P}^2)^n \hookrightarrow \mathbb{P}^N$ gives $\mathtt{Con}(n)$ homogeneous coordinates of the form $(\cdots,a_{ijk}w_{I,J,K},\cdots)$, where $i+j+k=2$, $I\cup J \cup K = \{1,\ldots, n\}$ is a disjoint union, and $w_{I,J,K}$ denotes the product of $x$ coordinates indexed by $I$, $y$ coordinates by $J$, and $z$ coordinates by $K$, so that for example if $n=6$ then $w_{125,4,36}=x_1x_2z_3y_4x_5z_6$. 

Any 1-parameter subgroup of $\mathtt{SL}_3$ can be diagonalized to the form \[t\mapsto\left(\begin{array}{ccc}t^{r_1} & 0 & 0 \\0 & t^{r_2} & 0 \\0 & 0 & t^{r_3}\end{array}\right)\] where $r_1 + r_2 + r_3 = 0$ and $r_1\le r_2 \le r_3$.  Since we are interested only in stability, which is invariant under scaling all the exponents by a common factor, we can divide the $r_i$ by $r_3$ and hence assume the subgroup is of the form \[t \mapsto \left(\begin{array}{ccc}t^b & 0 & 0 \\0 & t^{-1-b} & 0 \\0 & 0 & t\end{array}\right)\] for $-2\le b \le -\frac{1}{2}$.

\begin{definition}
For each weight vector $(\gamma, c_1,\ldots,c_n)\in \mathbb{Q}_{> 0}^{n+1}$ we consider the ample line bundle on $\mathtt{Con}(n)$ defined by \[(\pi_0^*\mathcal{O}_{\mathbb{P}^5}(\gamma)\otimes \pi_1^*\mathcal{O}_{\mathbb{P}^2}(c_1)\otimes \cdots \otimes \pi_n^*\mathcal{O}_{\mathbb{P}^2}(c_n))|_{\mathtt{Con}(n)}\] where $\pi_i$ are the obvious projections, and equip it with the unique $\mathtt{SL}_3$-linearization.  For any index set $I\subset \{1,\ldots,n\}$ we write $c_I := \sum_{i\in I}c_i$; the total weight of points is denoted $c := \sum_1^n c_i$.
\end{definition}

With this setup we claim that the weight of $a_{ijk}w_{I,J,K}$ is 
\begin{equation}\label{eqn:weight}
\gamma(i(1-b) + j(b+2) - 2) + bc_I + (-1-b)c_J + c_K.
\end{equation}
Indeed, $t\cdot x_l = t^{bc_l} x_l$ so we get a term $bc_I$ coming from the $x$ variables indexed by $I$; similarly, $t \cdot y_l = t^{(-1-b)c_l}y_l$ yields $(-1-b)c_J$ and $t\cdot z_l = t^{c_l}z_l$ yields $c_K$.  In other words, $t\cdot w_{I,J,K} = t^{bc_I + (-1-b)c_J + c_K}w_{I,J,K}$.  Now to find the weight for the action on the $a_{ijk}$ factor we see from the matrix product \[\left(\begin{array}{ccc}t^{-b} & 0 & 0 \\0 & t^{1+b} & 0 \\0 & 0 & t^{-1}\end{array}\right)\left(\begin{array}{ccc} a_{200} & \frac{1}{2}a_{110} & \frac{1}{2}a_{101} \\ \frac{1}{2}a_{110} & a_{020} & \frac{1}{2}a_{011} \\ \frac{1}{2}a_{101} & \frac{1}{2}a_{011} & a_{002} \end{array}\right)\left(\begin{array}{ccc}t^{-b} & 0 & 0 \\0 & t^{1+b} & 0 \\0 & 0 & t^{-1}\end{array}\right)\] that $t\cdot a_{ijk} = t^{\gamma(i(-b) + j(1+b) - k(-1))}a_{ijk}$.  But $k = 2-(i+j)$ so $i(-b)+j(1+b)-k(-1) = i(1-b) + j(2+b) - 2$, which explains Formula (\ref{eqn:weight}).

Therefore, a pointed conic is semistable iff the minimum $\mu$ of these weights over all possible nonzero choices of $a_{ijk}w_{I,J,K}$ and all $b\in [-2,-\frac{1}{2}]$ is $\le 0$, and it is stable iff $\mu < 0$.  Recall that conics have exactly three isomorphism classes: non-reduced (a double line), nodal (a pair of intersecting lines), and non-singular (isomorphic to $\mathbb{P}^1$).  We examine these three cases in turn.

\subsubsection*{Non-reduced conics are unstable}

The orbit of any non-reduced conic contains $x^2=0$ so to show all non-reduced conics are unstable it is enough to show this one is unstable.  Here we are forced to have $i=2,j=0,k=0$.  Moreover, since all points on $x^2=0$ have vanishing $x$-coordinate we must have $I=\varnothing$, so the weight is $\gamma(-2b) + (-1-b)(c_J) + c_K$.  Setting $b=-2$ this yields $4\gamma + c_J + c_K$, which is certainly positive.

\subsubsection*{Three conditions for (semi)stability of a nodal conic}  Here we derive three necessary conditions a semistable conic must satisfy and then show they are in fact sufficient.  For the nodal conic $xy=0$ we have $i=1,j=1,k=0$ so the weight is $\gamma + bc_I + (-1-b)c_J + c_K$.  Setting $b=-2$ yields $\gamma -2c_I + c_J + c_K = \gamma + c - 3c_I$ which is minimized when $c_I$ is maximized, so $\mu$ is computed by having $I$ index all points with nonzero $x$ coordinate.  The remaining weight $c-c_I$ must lie on the line $x=0$ and $\mu \le 0$ implies $c-c_I \le \frac{2c-\gamma}{3}$ so there is $\le \frac{2c-\gamma}{3}$ weight on the component $x=0$.  Since the action of $\mathtt{SL}_3$ sends any nodal conic to $xy=0$ and either component can be sent to $x=0$ this shows that any semistable nodal conic has $\le\frac{2c-\gamma}{3}$ weight on either component, or equivalently, it has $\le c-\frac{2c-\gamma}{3}=\frac{c+\gamma}{3}$ weight on each component off the node.  

Setting $b=-\frac{1}{2}$ the weight of $xy=0$ becomes $\gamma - \frac{1}{2}c_I - \frac{1}{2}c_J + c_K = \gamma + c - \frac{3}{2}(c_I + c_J)$ so $\mu$ is computed by indexing all points with nonzero $x$ or $y$ coordinate with $I\cup J$.  The only remaining points are at the node $x=y=0$ and $\mu \le 0$ implies $c-(c_I + c_J) \le \frac{c-2\gamma}{3}$ so a semistable nodal conic has $\le\frac{c-2\gamma}{3} = c-2(\frac{c+\gamma}{3})$ weight at the node.

For the conic $xz=0$ we have $i=1,j=0,k=1$ so for $b=-\frac{1}{2}$ the weight is $-\frac{\gamma}{2} -\frac{c_I}{2}  - \frac{c_J}{2} + c_K = c - \frac{\gamma}{2} - \frac{3}{2}(c_I + c_J)$ which means again that $\mu$ is computed by indexing all points away from $x=y=0$ with $I\cup J$.  But now this point is smooth so because $\mu\le 0$ implies $c - (c_I + c_J) \le \frac{c+\gamma}{3}$ we see that the weight at any smooth point of a semistable conic is $\le \frac{c+\gamma}{3}$.\\

We next show that a nodal conic satisfying these three conditions is semistable: \[\text{min}_{a_{ijk}w_{I,J,K}\ne 0}\{\gamma(i(1-b) + j(b+2) - 2)) + bc_I + (-1-b)c_J + c_K\}\le 0\] for all $b\in[-2,-\frac{1}{2}]$.  It is enough to show that for each such conic there is a single coordinate $a_{ijk}w_{I,J,K}\ne 0$ with weight $\le 0$ at both endpoints $b=-2,b=-\frac{1}{2}$, since all other values of $b$ are linearly interpolated from these.  If we define $\text{wt}(x^iy^jz^{2-i-j}) := i(1-b) + j(b+2) - 2$ then it is easy to see that  \[-2 \le b \le -1 \Rightarrow \text{wt}(x^2) \ge \text{wt}(xy) \ge \text{wt}(xz) \ge 0 \ge \text{wt}(y^2) \ge \text{wt}(yz) \ge \text{wt}(z^2),\] \[-1 \le b \le -\frac{1}{2} \Rightarrow \text{wt}(x^2) \ge \text{wt}(xy) \ge \text{wt}(y^2) \ge 0 \ge \text{wt}(xz) \ge \text{wt}(yz) \ge \text{wt}(z^2).\]   

\emph{Case 1: $x=y=0$ is a node}.  The conic cannot be the double line $x^2=0$ so it has a monomial term with weight $\le \text{wt}(xy)=1$ and hence the pointed conic has weight $\le 
\gamma + bc_I + (-1-b)c_J + c_K$.  Let $I$ index all points off $x=0$ and $J$ all remaining points off $y=0$.  Semistability at the boundary values $b=-2$ and $b=-\frac{1}{2}$ translates into the inequalities $c-c_I \le \frac{2c-\gamma}{3}$ and $c - (c_I + c_J) \le c - 2(\frac{c+\gamma}{3})$, respectively, but these are satisfied by the assumption that there is $\le \frac{c-2\gamma}{3}$ weight on any component and $\le c - 2(\frac{c+\gamma}{3})$ weight at the node.

\emph{Case 2: $x=y=0$ is a smooth point}.  An easy computation with partial derivatives shows that for $x=y=0$ to be smooth the conic cannot consist only of the monomials $x^2,xy,y^2$, so there is a monomial with weight $\le \text{wt}(xz)=-1-b$ and hence the pointed conic has weight $\le \gamma(-1-b) + bc_I + (-1-b)c_J + c_K$.  The boundary values $b=-2$ and $b=-\frac{1}{2}$ now translate to $c-c_I \le \frac{2c-\gamma}{3}$ and $c-(c_I + c_J) \le \frac{c+\gamma}{3}$, respectively, which can be satisfied simultaneously by choosing $I,J$ as before and noting that there is $\le \frac{c+\gamma}{3}$ weight at the smooth point $x=y=0$.

\subsubsection*{Two conditions for (semi)stability of a nonsingular conic}

Consider the nonsingular curve $x^2 + xy + xz + y^2$.  For $b=-\frac{1}{2}$ the monomial of minimal weight is $xz$ so when computing $\mu$ we set $i=1,j=0$ and find, as before, that semistability implies $\text{min}\{c - (c_I + c_J)\} \le \frac{c+\gamma}{3}$ and hence the weight at $x=y=0$ is $\le \frac{c+\gamma}{3}$ so the weight at any point of a semistable nonsingular conic is $\le \frac{c+\gamma}{3}$.  For $b=-1$ the minimal weight monomial is either $xz$ or $y^2$ (both have weight $0$) so the total weight is $-c_I + c_K = c -2c_I - c_J$.  Semistability implies $\text{max}\{2c_I + c_J\} \ge c$.  This maximum occurs when $I$ indexes all points off the line $x=0$ and $J$ indexes all remaining points off $y=0$.  The line $x=0$ is tangent to our conic, intersecting it at the unique point $x=y=0$, so $I$ indexes all points away from $x=y=0$ and $J=\varnothing$.  This means $c_J=0$ so $2c_I \ge c$, or equivalently $c-c_I \le \frac{c}{2}$, and the number $c-c_I$ here measures the weight at $x=y=0$, so any point of a semistable nonsingular conic has weight $\le \frac{c}{2}$.  Together this implies semistable nonsingular conics have $\le \mathtt{min}\{\frac{c+\gamma}{3},\frac{c}{2}\}$ weight at any point.\\

Conversely, to see that any nonsingular conic satisfying these two inequalities is semistable takes a little more work.  First, we verify semistability for conics that are not tangent to the line $x=0$.  If the conic is in the $\mathbb{C}$-linear span of $\{x^2,xy,xz,y^2\}$ then the intersection with the line $x=0$ is the single point $x=y=0$ so $x=0$ is a tangent line.  Therefore, we can assume there is a monomial with weight $\le \text{wt}(yz)$ (irrespective of $b$) so the pointed conic has weight $\le \gamma b + bc_I + (1-b)c_J + c_K$.  For $b=-2$ this is $\le 0$ iff $c-c_I \le 2(\frac{c + \gamma}{3})$, so by taking $I$ to index all points off $x=0$ this inequality is satisfied since the line $x=0$ intersects the conic in exactly two points, each of which has $\le \frac{c+\gamma}{3}$ weight.  For $b=-\frac{1}{2}$ the weight is $\le -\frac{\gamma}{2} - \frac{c_I}{2} - \frac{c_J}{2} + c_K$ which is $\le 0$ iff $c-(c_I + c_J) \le \frac{c+\gamma}{3}$, so if $J$ indexes all remaining points off $x=0$ then the inequality is satisfied since the weight at the point $x=y=0$ is $\le \frac{c+\gamma}{3}$.

For conics tangent to the line $x=0$ we need to subdivide the interval $[-2,-\frac{1}{2}]$ 
to show that there is a single coordinate with weight $\le 0$ when $b=-2$ and $b=-1$, whence by linear interpolation for all $b\in [-2,-1]$, and then separately that there is a coordinate with weight $\le 0$ for $b=-1$ and $b=-\frac{1}{2}$.  A conic in the $\mathbb{C}$-linear span of $\{x^2,xy,xz\}$ contains the line $x=0$ and hence cannot be nonsingular, so there is a monomial of weight $\le\text{wt}(y^2)$ and thus the pointed conic has weight $\le \gamma(2b+2) + bc_I + (-1-b)c_J + c_K$.  For $b=-2$ this is $\le 0$ iff $c-c_I \le 2(\frac{c+\gamma}{3})$, and this inequality is easily satisfied when $I$ indexes the points off $x=0$ (in fact tangency gives the stronger condition that $c-c_I$ measures the weight at the single point $x=y=0$ so it is $\le\frac{c+\gamma}{3}$).  For $b=-1$ the weight is $\le -c_I +c_K=c-2c_I - c_J$ so setting $J=\varnothing$ it is enough to check that $c-c_I \le \frac{c}{2}$.  The condition that $x=0$ intersects the conic in a unique point forces this inequality to be satisfied.  Finally, to prove semistability for $b\in[-1,-\frac{1}{2}]$ we observe that all conics in the $\mathbb{C}$-linear span of $\{x^2,xy,y^2\}$ are singular so there must be a monomial of weight $\le \text{wt}(xz)$.  If $b=-1$ then $\text{wt}(xz)=\text{wt}(y^2)$ and we have already seen such conics have weight $\le 0$.  If $b=-\frac{1}{2}$ then to show the weight is $\le 0$ it is enough to show that $c-(c_I+c_J) \le \frac{c+\gamma}{3}$, which holds by assumption.

\subsubsection*{Conclusion}  By the numerical criterion, the inequalities characterizing semistability of a pointed conic give a characterization for stability if they are replaced by strict inequalities.  Therefore the proof of Theorem \ref{thm:stability} is complete except for the final remark that singular conics are unstable if $\gamma > \frac{c}{2}$.  But this is immediate since $\gamma > \frac{c}{2} \Rightarrow c-2(\frac{c+\gamma}{3})<0$ so a semistable nodal conic with $\gamma > \frac{c}{2}$ would have to satisfy the impossible condition of having negative weight at the node.

\section{Walls and chambers}

In this section we use Theorem \ref{thm:stability} to discuss and prove Corollary \ref{cor:chamber}.  The ample line bundles on $\mathbb{P}^5\times (\mathbb{P}^2)^n$ restrict to give ample line bundles on $\mathtt{Con}(n)$ and each comes with a unique linearization for the action of $\mathtt{SL}_3$ so $\mathtt{Con}(n)$ inherits the $\mathtt{SL}_3$-ample cone $\mathbb{Q}^{n+1}_{>0}$.  Because $\frac{c+\gamma}{3} \le \frac{c}{2} \Leftrightarrow \gamma \le \frac{c}{2}$ we see by looking at the weight allowed at a smooth point of a conic that for a linearization $(\gamma,c_1,\ldots,c_n)$ the semistable locus is empty when $\mathtt{max}\{c_i\} > \frac{c+\gamma}{3}$ for $\gamma \le \frac{c}{2}$ and when $\mathtt{max}\{c_i\} > \frac{c}{2}$ for $\gamma \ge \frac{c}{2}$.  This suggests taking cross-sections of the $\mathtt{SL}_3$-ample cone to normalize in the following way: \begin{equation} \begin{cases}c+\gamma = 3 \text{ if } \gamma \le \frac{c}{2} \cr c=2 \text{ if } \gamma \ge \frac{c}{2} \cr \end{cases} \label{eqn:normalization}\end{equation}  The intersection of these two hyperplanes in the space of linearizations is the locus where $c=2$ and $\gamma=1$, so it is useful to view $\gamma$ as a parameter in $[0,\infty)$ such that for $\gamma \ge 1$ the point weights satisfy $c=2$ and for $\gamma \le 1$ they satisfy $c = 3 - \gamma$.  

If $\gamma > 1$ then singular conics are all unstable so the only relevant stability conditions are those of a nonsingular conic, namely that the weight at any point is $\le 1$.  Therefore, for any such $\gamma$ the $c_i$ satisfy $0 < c_i \le 1$ and $\sum_{i=1}^n c_i = 2$ so the linearization polytope is $\Delta(2,n)$.  The walls are where strictly semistable points occur, namely $\sum_{i\in I} c_i = 1$ for $I\subset\{1,\ldots,n\}$.  This is the same polytope and chamber decomposition as the GIT quotient parametrizing $n$ points on $\mathbb{P}^1$.  In fact, because a nonsingular conic is isomorphic to $\mathbb{P}^1$ and the action on marked points of $\mathtt{SL}_3$ stabilizing such a conic is the same as that of $\mathtt{SL}_2$ on $\mathbb{P}^1$ with the same stability conditions there is an isomorphism $\mathtt{Con}(n)\quotient_{(\gamma,\vec{c})}\mathtt{SL}_3 \cong (\mathbb{P}^1)^n\quotient_{\vec{c}~}\mathtt{SL}_2$ for any $\gamma > 1$.

On the other hand, if $\gamma \le 1$ then the linearization polytope is $\Delta(3-\gamma,n)$ since $\mathtt{min}\{\frac{c+\gamma}{3},\frac{c}{2}\} = \frac{c+\gamma}{3} = 1$.  The stability conditions for points on a nonsingular conic, for the weight on each component of a nodal conic, and for the weight at a smooth point of a nodal conic all introduce walls of the form $\sum_{i\in I}c_i = 1$ for $I\subset\{1,\ldots,n\}$.  The only remaining condition, the weight at the node, introduces walls $\sum_{i\in I}c_i = 2$ because if a collection of points indexed by $I\subset\{1,\ldots,n\}$ has total weight $c - 2$ then the complementary subset $\{1,\ldots,n\}\setminus I$ has total weight $2$.  If we let $\gamma$ vary in the interval $[0,1]$ then the semistable locus is nonempty as long as all entries of the weight vector are between 0 and 1, so the linearization polytope for this range is $\Delta(3,n+1)$ with walls $\sum_{i\in I}c_i = 1, \sum_{i\in I}c_i = 2$ that are independent of $\gamma$.  This concludes the proof of Corollary \ref{cor:chamber}, but it is worthwhile discussing the polytopes that arise and how they fit together.

\begin{figure}
\begin{center}
\begin{picture}(135,70)(0,0)
\linethickness{0.3mm}
\multiput(45,35)(0,1.82){6}{\line(0,1){0.91}}
\linethickness{0.3mm}
\multiput(35,45)(1.82,0){6}{\line(1,0){0.91}}
\linethickness{0.3mm}
\multiput(40,30)(1.43,1.43){4}{\multiput(0,0)(0.12,0.12){6}{\line(1,0){0.12}}}
\linethickness{0.3mm}
\multiput(30,30)(1.82,0){6}{\line(1,0){0.91}}
\linethickness{0.3mm}
\multiput(30,30)(0,1.82){6}{\line(0,1){0.91}}
\linethickness{0.3mm}
\multiput(30,40)(1.43,1.43){4}{\multiput(0,0)(0.12,0.12){6}{\line(1,0){0.12}}}
\linethickness{0.3mm}
\multiput(40,40)(1.43,1.43){4}{\multiput(0,0)(0.12,0.12){6}{\line(1,0){0.12}}}
\linethickness{0.3mm}
\multiput(30,40)(1.82,0){6}{\line(1,0){0.91}}
\linethickness{0.3mm}
\multiput(40,30)(0,1.82){6}{\line(0,1){0.91}}
\linethickness{0.3mm}
\multiput(35,60)(1.54,-1.28){20}{\multiput(0,0)(0.15,-0.13){5}{\line(1,0){0.15}}}
\linethickness{0.3mm}
\multiput(20,20)(0.7,1.86){22}{\multiput(0,0)(0.12,0.31){3}{\line(0,1){0.31}}}
\put(40,40){\makebox(0,0)[cc]{*}}

\linethickness{0.3mm}
\put(35,35){\line(1,0){40}}
\linethickness{0.3mm}
\put(35,35){\line(0,1){35}}
\linethickness{0.3mm}
\multiput(10,10)(0.12,0.12){208}{\line(1,0){0.12}}
\linethickness{0.3mm}
\multiput(105,35)(0,1.82){6}{\line(0,1){0.91}}
\linethickness{0.3mm}
\multiput(95,45)(1.82,0){6}{\line(1,0){0.91}}
\linethickness{0.3mm}
\multiput(100,30)(1.43,1.43){4}{\multiput(0,0)(0.12,0.12){6}{\line(1,0){0.12}}}
\linethickness{0.3mm}
\multiput(90,30)(1.82,0){6}{\line(1,0){0.91}}
\linethickness{0.3mm}
\multiput(90,30)(0,1.82){6}{\line(0,1){0.91}}
\linethickness{0.3mm}
\multiput(90,40)(1.43,1.43){4}{\multiput(0,0)(0.12,0.12){6}{\line(1,0){0.12}}}
\linethickness{0.3mm}
\multiput(100,40)(1.43,1.43){4}{\multiput(0,0)(0.12,0.12){6}{\line(1,0){0.12}}}
\linethickness{0.3mm}
\multiput(90,40)(1.82,0){6}{\line(1,0){0.91}}
\linethickness{0.3mm}
\multiput(100,30)(0,1.82){6}{\line(0,1){0.91}}
\linethickness{0.3mm}
\put(95,35){\line(1,0){40}}
\linethickness{0.3mm}
\put(95,35){\line(0,1){35}}
\linethickness{0.3mm}
\multiput(70,10)(0.12,0.12){208}{\line(1,0){0.12}}
\linethickness{0.3mm}
\multiput(84.02,24.38)(1.89,0.61){18}{\multiput(0,0)(0.32,0.1){3}{\line(1,0){0.32}}}
\linethickness{0.3mm}
\multiput(84.02,24.2)(0.7,1.81){16}{\multiput(0,0)(0.12,0.3){3}{\line(0,1){0.3}}}
\linethickness{0.3mm}
\multiput(94.82,52.32)(1.57,-1.17){15}{\multiput(0,0)(0.16,-0.12){5}{\line(1,0){0.16}}}
\linethickness{0.3mm}
\multiput(90,39.82)(0.12,-0.12){81}{\line(1,0){0.12}}
\linethickness{0.3mm}
\multiput(100.09,30.09)(0.12,0.36){41}{\line(0,1){0.36}}
\linethickness{0.3mm}
\multiput(90,40.18)(0.38,0.12){39}{\line(1,0){0.38}}
\linethickness{0.3mm}
\multiput(20.36,20.36)(1.89,0.62){24}{\multiput(0,0)(0.32,0.1){3}{\line(1,0){0.32}}}
\end{picture}
\caption{Hypersimplices $\Delta(3,3)$ (left) and $\Delta(2,3)$ (right) corresponding to $n=3$, $\gamma = 0$ and $\gamma \ge 1$, respectively.}
\label{fig:delta3}
\end{center}
\end{figure}
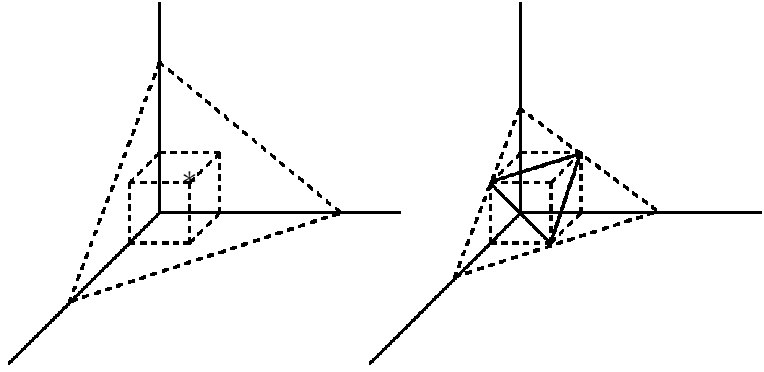

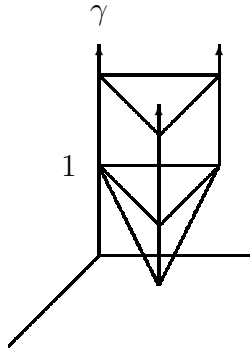
\begin{figure} 
\begin{center}
\begin{picture}(40,55)(0,0)
\linethickness{0.3mm}
\multiput(0,0)(0.12,0.12){125}{\line(1,0){0.12}}
\linethickness{0.3mm}
\put(15,15){\line(1,0){25}}
\linethickness{0.3mm}
\put(15,15){\line(0,1){35}}
\put(15,50){\vector(0,1){0.12}}
\put(15,55){\makebox(0,0)[cc]{$\gamma$}}
\linethickness{0.3mm}
\multiput(15,45)(0.12,-0.12){83}{\line(1,0){0.12}}
\linethickness{0.3mm}
\multiput(25,35)(0.12,0.12){83}{\line(1,0){0.12}}
\linethickness{0.3mm}
\put(15,45){\line(1,0){20}}
\linethickness{0.3mm}
\multiput(15,30)(0.12,-0.12){83}{\line(1,0){0.12}}
\linethickness{0.3mm}
\multiput(25,20)(0.12,0.12){83}{\line(1,0){0.12}}
\linethickness{0.3mm}
\put(15,30){\line(1,0){20}}
\linethickness{0.3mm}
\multiput(15,30)(0.12,-0.24){83}{\line(0,-1){0.24}}
\linethickness{0.3mm}
\multiput(25,10)(0.12,0.24){83}{\line(0,1){0.24}}
\linethickness{0.3mm}
\put(25,10){\line(0,1){10}}
\linethickness{0.3mm}
\put(25,20){\line(0,1){20}}
\put(25,40){\vector(0,1){0.12}}
\linethickness{0.3mm}
\put(35,30){\line(0,1){20}}
\put(35,50){\vector(0,1){0.12}}
\put(10,30){\makebox(0,0)[cc]{1}}
\end{picture}
\caption{The space of normalized linearizations for $\mathtt{Con}(3)$.}
\label{fig:linpoly3}
\end{center}
\end{figure}

The hypersimplex $\Delta(k,n)$ is sometimes described as the convex hull of all sums of $k$ distinct unit coordinate vectors in $\mathbb{R}^n$.  However, in our setting $k$ can be rational so the relevant definition is the intersection of the hypercube $[0,1]^n$ with a scaled copy of the standard coordinate simplex.  For $n=3$ we can draw the scaled 2-simplex and see how it intersects the cube $[0,1]^3$.  When $k=3$ (so $\gamma=0$) the intersection is a single point, namely $\Delta(3,3) = \{(1,1,1)\}$.  As $k$ varies from 3 to 2 (so $\gamma$ goes from 0 to 1) the 2-simplex shrinks so that the cube cuts out a larger piece of it.   The hypersimplex $\Delta(2,3)$ is a 2-simplex dual to the ambient 2-simplex.  See Figure \ref{fig:delta3}.  Putting this together gives a picture of the space of normalized linearizations for $n=3$ (admittedly an uninteresting case from a GIT/moduli perspective, but a helpful illustration nonetheless).  See Figure \ref{fig:linpoly3}.  The polytope obtained by restricting $\gamma$ to the interval $[0,1]$ is seen to be the tetrahedron which, as we discuss momentarily, is the hypersimplex $\Delta(3,4)$.  This exemplifies the general result that the polytope of linearizations for $\gamma$ on this interval is $\Delta(3,n+1)$.

\begin{figure}
\begin{center}
\begin{picture}(152.5,75.18)(0,0)
\linethickness{0.3mm}
\multiput(7.14,25.18)(1.75,-1.03){21}{\multiput(0,0)(0.22,-0.13){4}{\line(1,0){0.22}}}
\linethickness{0.3mm}
\multiput(43.04,4.11)(1.7,1.07){21}{\multiput(0,0)(0.21,0.13){4}{\line(1,0){0.21}}}
\linethickness{0.3mm}
\multiput(39.64,73.57)(1.25,-1.56){31}{\multiput(0,0)(0.13,-0.16){5}{\line(0,-1){0.16}}}
\linethickness{0.3mm}
\multiput(7.14,25.18)(1.1,1.64){30}{\multiput(0,0)(0.11,0.16){5}{\line(0,1){0.16}}}
\linethickness{0.3mm}
\multiput(39.82,73.21)(0.09,-2){35}{\multiput(0,0)(0.05,-1){1}{\line(0,-1){1}}}
\linethickness{0.3mm}
\multiput(7.32,25.71)(1.98,0.01){36}{\multiput(0,0)(0.99,0){1}{\line(1,0){0.99}}}
\linethickness{0.3mm}
\multiput(30,37.14)(0.12,-0.16){118}{\line(0,-1){0.16}}
\linethickness{0.3mm}
\multiput(43.93,17.5)(0.12,0.19){110}{\line(0,1){0.19}}
\linethickness{0.3mm}
\multiput(30.54,36.96)(4.49,0.12){6}{\line(1,0){4.49}}
\linethickness{0.3mm}
\multiput(30.54,37.5)(0.32,0.12){31}{\line(1,0){0.32}}
\linethickness{0.3mm}
\multiput(40.36,41.07)(0.63,-0.12){27}{\line(1,0){0.63}}
\linethickness{0.3mm}
\multiput(40.36,41.07)(0.12,-0.83){28}{\line(0,-1){0.83}}
\linethickness{0.3mm}
\multiput(99.05,25.99)(1.7,-1.06){15}{\multiput(0,0)(0.21,-0.13){4}{\line(1,0){0.21}}}
\linethickness{0.3mm}
\multiput(123.73,10.59)(1.65,1.11){15}{\multiput(0,0)(0.17,0.11){5}{\line(1,0){0.17}}}
\linethickness{0.3mm}
\multiput(121.4,61.35)(1.22,-1.61){22}{\multiput(0,0)(0.12,-0.16){5}{\line(0,-1){0.16}}}
\linethickness{0.3mm}
\multiput(99.05,25.99)(1.09,1.72){21}{\multiput(0,0)(0.11,0.17){5}{\line(0,1){0.17}}}
\linethickness{0.3mm}
\multiput(121.51,61.09)(0.09,-1.98){26}{\multiput(0,0)(0.04,-0.99){1}{\line(0,-1){0.99}}}
\linethickness{0.3mm}
\multiput(99.17,26.38)(1.97,0.01){25}{\multiput(0,0)(0.98,0){1}{\line(1,0){0.98}}}
\put(6.96,19.29){\makebox(0,0)[cc]{(3,0,0,0)}}
\put(46.25,75.18){\makebox(0,0)[cc]{(0,0,3,0)}}
\put(51.07,2.32){\makebox(0,0)[cc]{(0,3,0,0)}}
\put(80.18,20.71){\makebox(0,0)[cc]{(0,0,0,3)}}
\put(23.57,33.75){\makebox(0,0)[cc]{(1,1,1,0)}}
\put(34.29,44.11){\makebox(0,0)[cc]{(1,0,1,1)}}
\put(63.57,34.46){\makebox(0,0)[cc]{(0,1,1,1)}}
\put(49.46,15.36){\makebox(0,0)[cc]{(1,1,0,1)}}
\put(97.76,21.02){\makebox(0,0)[cc]{(2,0,0,0)}}
\put(127.32,63.75){\makebox(0,0)[cc]{(0,0,2,0)}}
\put(128.75,8.04){\makebox(0,0)[cc]{(0,2,0,0)}}
\put(152.5,22.68){\makebox(0,0)[cc]{(0,0,0,2)}}
\put(103.39,45){\makebox(0,0)[cc]{(1,0,1,0)}}
\put(142.86,44.11){\makebox(0,0)[cc]{(0,0,1,1)}}
\put(141.96,15.54){\makebox(0,0)[cc]{(0,1,0,1)}}
\put(108.93,14.11){\makebox(0,0)[cc]{(1,1,0,0)}}
\linethickness{0.3mm}
\multiput(111.25,18.39)(0.12,0.16){95}{\line(0,1){0.16}}
\linethickness{0.3mm}
\multiput(110.18,42.86)(0.17,-0.12){74}{\line(1,0){0.17}}
\linethickness{0.3mm}
\multiput(110.18,42.86)(0.12,-2.74){9}{\line(0,-1){2.74}}
\linethickness{0.3mm}
\multiput(122.86,33.93)(0.12,-0.13){115}{\line(0,-1){0.13}}
\linethickness{0.3mm}
\multiput(136.07,41.79)(0.12,-3.75){6}{\line(0,-1){3.75}}
\linethickness{0.3mm}
\multiput(122.68,33.75)(0.2,0.12){67}{\line(1,0){0.2}}
\linethickness{0.3mm}
\multiput(110.54,43.04)(0.12,-0.19){88}{\line(0,-1){0.19}}
\linethickness{0.3mm}
\multiput(121.25,26.07)(0.12,0.12){124}{\line(0,1){0.12}}
\linethickness{0.3mm}
\multiput(110.36,42.32)(3.67,-0.13){7}{\line(1,0){3.67}}
\linethickness{0.3mm}
\multiput(111.25,18.04)(8.45,0.12){3}{\line(1,0){8.45}}
\linethickness{0.3mm}
\multiput(121.43,26.07)(0.24,-0.12){63}{\line(1,0){0.24}}
\linethickness{0.3mm}
\multiput(111.43,18.04)(0.15,0.12){67}{\line(1,0){0.15}}
\end{picture}
\caption{Hypersimplices $\Delta(3,4)$ (left) and $\Delta(2,4)$ (right) corresponding to $n=4$, $\gamma = 0$ and $\gamma \ge 1$, respectively.}
\label{fig:delta4}
\end{center}
\end{figure}
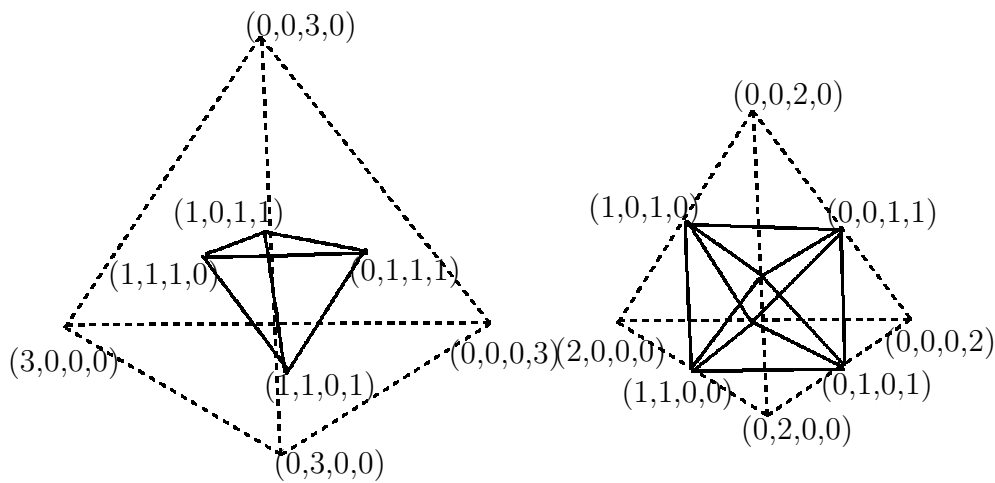

For $n=4$ the intersection of the scaled simplex and hypercube can be described by looking at what happens on each face of the simplex, since this just repeats the situation of $n=3$.  For $k=3$ on each face the only point of intersection is the center point of the face so $\Delta(3,4)$ is the convex hull of these four points, a 3-simplex dual to the ambient 3-simplex.  For $k=2$ the intersection with each face of the 3-simplex is a 2-simplex dual to that face and the convex hull of these forms an octahedron.  See Figure \ref{fig:delta4}.  For intermediate values $3 > k > 2$ the 3-simplex shrinks with respect to the hypercube so that $\Delta(k,4)$ is a truncated tetrahedron.  Thus, the linearization polytope $\Delta(3,5)$ for $n=4$ is obtained by starting with a tetrahedron and slowly truncating as $\gamma$ increase from 0 to 1 until it is truncated all the way to an octahedron.

To see why the union of the linearization polytopes $\Delta(3-\gamma,n)$ as $\gamma$ ranges in $[0,1]$ is $\Delta(3,n+1)$, note that slicing with the hyperplane $\gamma = 0$ yields the hypersimplex $\Delta(3,n)$ which is the convex hull of the $\binom{n}{3}$ points whose coordinates have a 0 in the first entry (for $\gamma$) and all other entries are 0 except for three entries with a 1.  Slicing with $\gamma = 1$ gives the hypersimplex $\Delta(2,n)$ whose $\binom{n}{2}$ coordinates have a 1 in the first entry (for $\gamma$) and the remaining entries are all zero except for two entries with a 1.  Collectively these vertices are the $\binom{n}{3}+\binom{n}{2} = \binom{n+1}{3}$ vertices of the hypersimplex $\Delta(3,n+1)$ whose coordinates are all vectors of length $n+1$ with three 1s and 0s elsewhere.

\section{Moduli stability and GIT stability}

In this section we prove Theorem \ref{thm:contraction}.  The first step is to normalize the GIT linearization vector $(\gamma,c_1,\ldots,c_n)$ so that it is compatible with the weight data in Hassett's moduli spaces.  Recall (see \cite{Hass}) that for $\mathcal{A} := \{a_1,\ldots,a_n\}\in\mathbb{Q}^n\cap [0,1]^n$ the moduli space $\overline{\mathcal{M}}_{0,\mathcal{A}}$ parametrizes $n$-pointed nodal rational curves for which the \emph{log canonical divisor} $K + a_1p_1 + \cdots + a_np_n$ (i.e. the dualizing sheaf twisted by the marked points $p_i$ with weights $a_i$) is ample and such that the $p_i$ are smooth and $\sum_{i\in I}a_i \le 1$ if $\{p_i\}_{i\in I}$ simultaneously collide.  In what follows we take the moduli weight vector $\mathcal{A}$ to be the GIT point weights $\vec{c}=(c_1,\ldots,c_n)$.  Because Hassett requires $c_i\in[0,1]$, the point weights in the GIT linearizations should also be in this range---and fortunately this is precisely the condition imposed by the normalization (\ref{eqn:normalization}) used to describe the polytope of linearizations.  Since Kapranov's work \cite{Kapr} addresses GIT quotients $(\mathbb{P}^1)\quotient\mathtt{SL}_2$, and our quotients $\mathtt{Con}(n)\quotient\mathtt{SL}_3$ are isomorphic to these for $\gamma > \frac{c}{2}$, where $c:=c_1+\cdots+c_n$, we can restrict attention to the part of the $\mathtt{SL}_3$-ample cone described by $\gamma \le \frac{c}{2}$.  By Corollary \ref{cor:chamber} this subcone admits a uniform cross-section $c+\gamma=3$ in such a way that rays are in bijection with the polytope $\Delta(3,n+1)$ and the point weights satisfy $c_i\in[0,1]$ as required.

\begin{remark} For fixed $\gamma$ the vector $(\gamma,\vec{c})\in\Delta(3,n+1)$ satisfies $\vec{c}\in\Delta(3-\gamma,n)$ and this forces all curves parametrized by $\overline{\mathcal{M}}_{0,\vec{c}}$ to be \emph{chains} of $\mathbb{P}^1$s.  Indeed, the dualizing sheaf restricted to a component with one node has degree $2g-2+1=-1$ so for a curve to be stable it must have $> 1 = \frac{\gamma+c}{3} \ge \frac{c}{3}$ weight on each such component so there can be at most two of them.
\end{remark}

There is a reduction morphism $\overline{\mathcal{M}}_{0,n} \rightarrow \overline{\mathcal{M}}_{0,\vec{c}}$ (\cite{Hass}, Theorem 4.1) which contracts a tree of $\mathbb{P}^1$s down to a chain of $\mathbb{P}^1$s.   

\begin{remark} To prove Theorem \ref{thm:contraction} it is enough to show that there is a morphism $\overline{\mathcal{M}}_{0,\vec{c}} \rightarrow \mathtt{Con}(n)\quotient_{(\gamma,\vec{c})}\mathtt{SL}_3$ when $(\gamma,\vec{c})$ lies in the interior of a GIT chamber.  Indeed, on the face $\gamma=1$ it is easy to see that $\mathtt{Con}(n)\quotient_{(\gamma,\vec{c})}\mathtt{SL}_3 \cong (\mathbb{P}^1)^n\quotient_{\vec{c}}~\mathtt{SL}_2$ (even though the universal curves are different), and Hassett's space $\overline{\mathcal{M}}_{0,\vec{c}}$ is by definition $(\mathbb{P}^1)^n\quotient_{\vec{c}}~\mathtt{SL}_2$ here since $c=2$, so we can assume $\gamma < 1$, or equivalently $c > 2$.  Thus for a linearization $(\gamma,\vec{c})$ on any wall except those of the form $c_i = 0$ we can find a nearby linearization $(\gamma+\epsilon,\vec{c}-\epsilon)$ lying in an open GIT chamber.  By general principles of variation of GIT (\cite{Thad}, Theorem 2.3) any GIT quotient for linearization on a wall or boundary receives a morphism from a quotient for linearization in the interior of an adjacent chamber, so we may deduce the desired factorization for a linearization on a wall or boundary from that of one in an open chamber by considering the composition \[\overline{\mathcal{M}}_{0,n} \rightarrow \overline{\mathcal{M}}_{0,\vec{c}} \rightarrow \overline{\mathcal{M}}_{0,\vec{c}-\epsilon} \rightarrow \mathtt{Con}(n)\quotient_{(\gamma+\epsilon,\vec{c}-\epsilon)}\mathtt{SL}_3 \rightarrow \mathtt{Con}(n)\quotient_{(\gamma,\vec{c})}\mathtt{SL}_3.\] 
All the morphisms in this composition are clearly birational.  For the remained of this section, therefore, we assume $(\gamma,\vec{c})\in\Delta(3,n+1)$ lies in an open GIT chamber.
 \end{remark}

\begin{definition}
Let $\pi : \mathcal{C}_{0,\vec{c}} \rightarrow\overline{\mathcal{M}}_{0,\vec{c}}$ be the universal curve with universal sections $p_1,\ldots,p_n$.  Recall (\cite{Hass}, Proposition 5.4) that $\mathcal{C}_{0,\vec{c}} = \mathcal{M}_{0,\vec{c}\cup\{\epsilon\}}$ for sufficiently small $\epsilon > 0$.
\end{definition}

Intuitively, because no points have weight 1 the curve represented by a point in the moduli space is traced out by throwing in an additional marked point with such small weight that it can freely pass by all the other points.  

\begin{proposition} \label{prop:linebundle} The line bundle $L := \omega_\pi^{-1}$ on $\mathcal{C}_{0,\vec{c}}$ induces a morphism which embeds each stable nonsingular curve as a nonsingular conic and contracts the inner components of each singular stable curve and embeds the resulting curve as a nodal conic (see Figure \ref{fig:contract}).  Moreover, the  conics obtained this way are all GIT stable with respect to the linearization $(\gamma,\vec{c})$ and every GIT stable conic comes from a moduli-stable curve in this manner.
\end{proposition}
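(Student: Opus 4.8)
The plan is to analyze the anticanonical (dualizing) linear system on the universal curve fiber-by-fiber and then match the resulting embedded conics against the GIT stability criterion of Theorem \ref{thm:stability}. First I would recall that the curves parametrized by $\overline{\mathcal{M}}_{0,\vec{c}}$ are, by the Remark following Corollary \ref{cor:chamber}, chains of at most two $\mathbb{P}^1$s. On a nonsingular fiber $C\cong\mathbb{P}^1$ the anticanonical bundle $\omega_\pi^{-1}|_C \cong \mathcal{O}_{\mathbb{P}^1}(2)$ has three sections and the complete linear system $|\mathcal{O}(2)|$ embeds $\mathbb{P}^1$ as a smooth conic in $\mathbb{P}^2 = \mathbb{P}(H^0(C,\omega_\pi^{-1})^*)$; this is the classical Veronese/conic embedding. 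On a nodal fiber $C = C_1\cup C_2$ with the two components meeting at a node, the dualizing sheaf restricts to $\omega_\pi^{-1}|_{C_i}\cong\mathcal{O}_{\mathbb{P}^1}(1)$ on each \emph{end} component (degree $2g-2+\delta = -1$, so the dual has degree $1$), while on any \emph{inner} component (two nodes, degree $0$ for $\omega^{-1}$) it is trivial and hence contracted by the linear system. Since the curves here are chains of length at most two there are in fact no inner components in the generic stable case, but the statement must be phrased to cover the reduction picture; the key local computation is that a length-one chain maps to two lines $\mathcal{O}(1)\cup\mathcal{O}(1)$ glued at a point, i.e.\ a nodal conic $xy=0$, and that the node of the curve maps to the node of the conic.

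Next I would verify that the marked points land in the correct positions and that the resulting pointed conic satisfies the inequalities of Theorem \ref{thm:stability}. The essential input is the Hassett stability condition: on each component $C$ one has $\sum_{p_i\in C}c_i + \delta_C > 2$, equivalently (using $\gamma + c = 3$, so $\frac{c+\gamma}{3}=1$) that the weight away from the node on each end component exceeds $1 = \frac{c+\gamma}{3}$. I would translate each of the three nodal-conic conditions of Theorem \ref{thm:stability} directly into a Hassett condition: the bound $\le\frac{c+\gamma}{3}$ on weight at a smooth point becomes the requirement that marked points may collide only with total weight $\le 1$ (precisely Hassett's collision bound since $(\gamma,\vec c)$ lies in an open chamber, forcing strict inequalities); the bound $\le c - 2(\frac{c+\gamma}{3})$ at the node corresponds to the fact that marked points avoid the node (weight $0$ there) together with the chamber condition; and the per-component bound $\le\frac{2c-\gamma}{3}$, equivalently $\ge\frac{c+\gamma}{3}=1$ weight off the node on each component, is exactly the Hassett stability inequality $\sum_{p_i\in C_j}c_i > 1$. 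Because we are in an open GIT chamber, Theorem \ref{thm:stability} gives \emph{strict} inequalities, matching Hassett's strict stability, so the translation is a clean equivalence.

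For the surjectivity half---that every GIT-stable conic arises from a moduli-stable curve---I would argue by reversing the construction. Given a GIT-stable pointed conic, its underlying curve is either a smooth conic (isomorphic to $\mathbb{P}^1$) or a nodal conic (two lines meeting at a point, i.e.\ a two-component chain); non-reduced conics are excluded since they are unstable by Theorem \ref{thm:stability}. Endowing the abstract curve with the pullback of the marked points and checking that the GIT inequalities of Theorem \ref{thm:stability} are precisely the Hassett conditions (as established above) shows the abstract pointed curve is moduli-stable, and by construction its anticanonical embedding recovers the given conic up to the $\mathtt{SL}_3$-action. I expect the main obstacle to be the careful bookkeeping in the nodal case: one must confirm that the anticanonical map does not accidentally separate or further degenerate the node, that the gluing data at the node is respected so that the image is genuinely a nodal conic rather than two disjoint lines, and that marked points lying near the node on the abstract curve map to the correct smooth points of the conic with the weight at the node itself being zero. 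Establishing this compatibility of the node---essentially that $\omega_\pi^{-1}$ is relatively very ample away from the contracted locus and that its sections separate the two branches appropriately---is the technical heart of the argument; the stability translation, while needing care, is then a direct matching of inequalities.
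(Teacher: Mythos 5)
Your overall route is the same as the paper's: analyze $\omega_\pi^{-1}$ fiber by fiber through its degrees on the components, and then translate the GIT inequalities of Theorem \ref{thm:stability} (with the normalization $\frac{c+\gamma}{3}=1$) into Hassett's stability inequalities. There is, however, a genuine gap caused by a misreading of the Remark following Corollary \ref{cor:chamber}. That remark shows the stable curves are \emph{chains}, i.e.\ trees with at most two \emph{extremal} components (components carrying a single node); it does not say the curves have at most two components. Inner components, each carrying two nodes and any positive amount of marked-point weight, are perfectly allowed --- for instance seven points of weight roughly $\frac{2}{5}$ distributed $3+1+3$ along a three-component chain is Hassett-stable, since each end component then carries weight $>1$ and the middle one carries positive weight. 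Your assertion that ``there are in fact no inner components in the generic stable case'' is therefore incorrect, and it propagates into the one place where your stability translation actually fails: the condition at the node. The morphism induced by $L$ contracts every inner component to the node of the image conic, so all marked points on those inner components land \emph{at} the node, and the weight there is in general positive, not zero. The correct justification of the bound $c-2(\frac{c+\gamma}{3})=c-2$ at the node is the one the paper gives: each of the two extremal components carries $>1$ weight by Hassett stability, so the total weight remaining on the inner components --- which is exactly the weight deposited at the node --- is $<c-2$. Your argument (``marked points avoid the node, so the weight there is $0$'') covers only two-component chains.

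The items you flag as the ``technical heart'' are dispatched more cheaply than you anticipate: Serre duality gives $h^1(L|_C)=h^0(\omega_C^{\otimes 2})=0$ on every fiber, so Riemann--Roch yields $h^0(L|_C)=3$ and relative basepoint-freeness can be checked componentwise (sections are constant on inner components and have no common zero on the extremal ones); the image is then a degree-$2$ curve in $\mathbb{P}^2$, i.e.\ a conic, nodal exactly when the source is reducible. With the node condition repaired as above, the rest of your inequality matching and your sketch of the converse agree with the paper's argument.
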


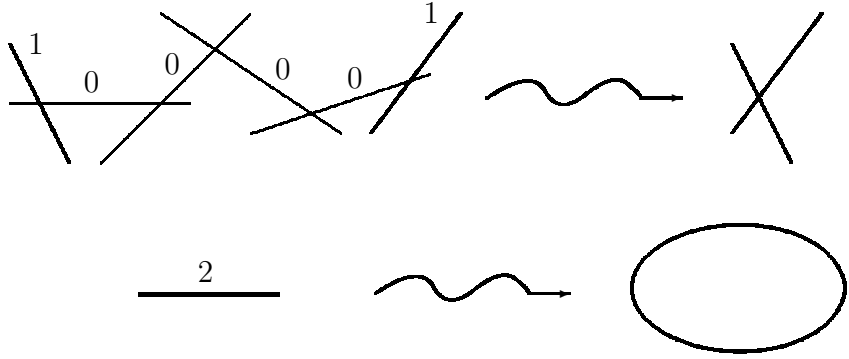
\begin{figure}
\begin{center}
\begin{picture}(140.89,57.5)(0,0)
\linethickness{0.4mm}
\multiput(2.14,52.5)(0.12,-0.24){83}{\line(0,-1){0.24}}
\linethickness{0.25mm}
\put(2.14,42.5){\line(1,0){30}}
\linethickness{0.25mm}
\multiput(17.14,32.5)(0.12,0.12){208}{\line(1,0){0.12}}
\linethickness{0.25mm}
\multiput(27.14,57.5)(0.18,-0.12){167}{\line(1,0){0.18}}
\linethickness{0.25mm}
\multiput(42.14,37.5)(0.36,0.12){83}{\line(1,0){0.36}}
\linethickness{0.4mm}
\multiput(62.14,37.5)(0.12,0.16){125}{\line(0,1){0.16}}
\linethickness{0.4mm}
\multiput(122.14,52.5)(0.12,-0.24){83}{\line(0,-1){0.24}}
\linethickness{0.4mm}
\multiput(122.14,37.5)(0.12,0.16){125}{\line(0,1){0.16}}
\put(6.43,52.5){\makebox(0,0)[cc]{1}}
\put(72.14,57.5){\makebox(0,0)[cc]{1}}
\put(15.71,46.07){\makebox(0,0)[cc]{0}}
\put(59.46,46.79){\makebox(0,0)[cc]{0}}
\put(29.11,49.11){\makebox(0,0)[cc]{0}}
\put(47.5,48.39){\makebox(0,0)[cc]{0}}
\linethickness{0.3mm}
\qbezier(81.43,43.39)(88.86,48.47)(91.07,44.51)
\qbezier(91.07,44.51)(93.28,40.55)(97.5,43.75)
\qbezier(97.5,43.75)(102.31,47.71)(104.64,45.85)
\qbezier(104.64,45.85)(106.98,43.98)(106.96,43.39)
\put(106.96,43.39){\line(1,0){6.79}}
\put(113.75,43.39){\vector(1,0){0.12}}
\linethickness{0.4mm}
\put(140.89,11.64){\line(0,1){0.3}}
\multiput(140.88,11.34)(0.01,0.3){1}{\line(0,1){0.3}}
\multiput(140.85,11.05)(0.03,0.3){1}{\line(0,1){0.3}}
\multiput(140.81,10.75)(0.04,0.29){1}{\line(0,1){0.29}}
\multiput(140.75,10.46)(0.06,0.29){1}{\line(0,1){0.29}}
\multiput(140.68,10.17)(0.07,0.29){1}{\line(0,1){0.29}}
\multiput(140.6,9.88)(0.08,0.29){1}{\line(0,1){0.29}}
\multiput(140.5,9.59)(0.1,0.29){1}{\line(0,1){0.29}}
\multiput(140.39,9.3)(0.11,0.29){1}{\line(0,1){0.29}}
\multiput(140.27,9.01)(0.12,0.29){1}{\line(0,1){0.29}}
\multiput(140.13,8.73)(0.14,0.28){1}{\line(0,1){0.28}}
\multiput(139.98,8.45)(0.15,0.28){1}{\line(0,1){0.28}}
\multiput(139.82,8.17)(0.16,0.28){1}{\line(0,1){0.28}}
\multiput(139.64,7.89)(0.18,0.28){1}{\line(0,1){0.28}}
\multiput(139.45,7.62)(0.09,0.14){2}{\line(0,1){0.14}}
\multiput(139.25,7.35)(0.1,0.13){2}{\line(0,1){0.13}}
\multiput(139.03,7.08)(0.11,0.13){2}{\line(0,1){0.13}}
\multiput(138.81,6.82)(0.11,0.13){2}{\line(0,1){0.13}}
\multiput(138.57,6.56)(0.12,0.13){2}{\line(0,1){0.13}}
\multiput(138.31,6.31)(0.13,0.13){2}{\line(0,1){0.13}}
\multiput(138.05,6.06)(0.13,0.13){2}{\line(1,0){0.13}}
\multiput(137.77,5.81)(0.14,0.12){2}{\line(1,0){0.14}}
\multiput(137.49,5.57)(0.14,0.12){2}{\line(1,0){0.14}}
\multiput(137.19,5.33)(0.15,0.12){2}{\line(1,0){0.15}}
\multiput(136.88,5.1)(0.15,0.12){2}{\line(1,0){0.15}}
\multiput(136.56,4.88)(0.16,0.11){2}{\line(1,0){0.16}}
\multiput(136.23,4.66)(0.17,0.11){2}{\line(1,0){0.17}}
\multiput(135.89,4.44)(0.17,0.11){2}{\line(1,0){0.17}}
\multiput(135.54,4.23)(0.18,0.1){2}{\line(1,0){0.18}}
\multiput(135.18,4.03)(0.18,0.1){2}{\line(1,0){0.18}}
\multiput(134.81,3.83)(0.18,0.1){2}{\line(1,0){0.18}}
\multiput(134.43,3.64)(0.19,0.1){2}{\line(1,0){0.19}}
\multiput(134.04,3.46)(0.19,0.09){2}{\line(1,0){0.19}}
\multiput(133.65,3.28)(0.4,0.18){1}{\line(1,0){0.4}}
\multiput(133.24,3.11)(0.4,0.17){1}{\line(1,0){0.4}}
\multiput(132.83,2.94)(0.41,0.16){1}{\line(1,0){0.41}}
\multiput(132.41,2.79)(0.42,0.16){1}{\line(1,0){0.42}}
\multiput(131.98,2.64)(0.43,0.15){1}{\line(1,0){0.43}}
\multiput(131.55,2.49)(0.43,0.14){1}{\line(1,0){0.43}}
\multiput(131.11,2.36)(0.44,0.14){1}{\line(1,0){0.44}}
\multiput(130.66,2.23)(0.45,0.13){1}{\line(1,0){0.45}}
\multiput(130.21,2.11)(0.45,0.12){1}{\line(1,0){0.45}}
\multiput(129.75,2)(0.46,0.11){1}{\line(1,0){0.46}}
\multiput(129.29,1.89)(0.46,0.11){1}{\line(1,0){0.46}}
\multiput(128.82,1.79)(0.47,0.1){1}{\line(1,0){0.47}}
\multiput(128.35,1.7)(0.47,0.09){1}{\line(1,0){0.47}}
\multiput(127.87,1.62)(0.48,0.08){1}{\line(1,0){0.48}}
\multiput(127.39,1.55)(0.48,0.07){1}{\line(1,0){0.48}}
\multiput(126.91,1.48)(0.48,0.07){1}{\line(1,0){0.48}}
\multiput(126.42,1.42)(0.49,0.06){1}{\line(1,0){0.49}}
\multiput(125.93,1.38)(0.49,0.05){1}{\line(1,0){0.49}}
\multiput(125.44,1.33)(0.49,0.04){1}{\line(1,0){0.49}}
\multiput(124.95,1.3)(0.49,0.03){1}{\line(1,0){0.49}}
\multiput(124.45,1.28)(0.49,0.02){1}{\line(1,0){0.49}}
\multiput(123.96,1.26)(0.5,0.02){1}{\line(1,0){0.5}}
\multiput(123.46,1.25)(0.5,0.01){1}{\line(1,0){0.5}}
\put(122.97,1.25){\line(1,0){0.5}}
\multiput(122.47,1.26)(0.5,-0.01){1}{\line(1,0){0.5}}
\multiput(121.98,1.28)(0.5,-0.02){1}{\line(1,0){0.5}}
\multiput(121.48,1.3)(0.49,-0.02){1}{\line(1,0){0.49}}
\multiput(120.99,1.33)(0.49,-0.03){1}{\line(1,0){0.49}}
\multiput(120.5,1.38)(0.49,-0.04){1}{\line(1,0){0.49}}
\multiput(120.01,1.42)(0.49,-0.05){1}{\line(1,0){0.49}}
\multiput(119.52,1.48)(0.49,-0.06){1}{\line(1,0){0.49}}
\multiput(119.04,1.55)(0.48,-0.07){1}{\line(1,0){0.48}}
\multiput(118.56,1.62)(0.48,-0.07){1}{\line(1,0){0.48}}
\multiput(118.08,1.7)(0.48,-0.08){1}{\line(1,0){0.48}}
\multiput(117.61,1.79)(0.47,-0.09){1}{\line(1,0){0.47}}
\multiput(117.14,1.89)(0.47,-0.1){1}{\line(1,0){0.47}}
\multiput(116.68,2)(0.46,-0.11){1}{\line(1,0){0.46}}
\multiput(116.22,2.11)(0.46,-0.11){1}{\line(1,0){0.46}}
\multiput(115.77,2.23)(0.45,-0.12){1}{\line(1,0){0.45}}
\multiput(115.32,2.36)(0.45,-0.13){1}{\line(1,0){0.45}}
\multiput(114.88,2.49)(0.44,-0.14){1}{\line(1,0){0.44}}
\multiput(114.45,2.64)(0.43,-0.14){1}{\line(1,0){0.43}}
\multiput(114.02,2.79)(0.43,-0.15){1}{\line(1,0){0.43}}
\multiput(113.6,2.94)(0.42,-0.16){1}{\line(1,0){0.42}}
\multiput(113.19,3.11)(0.41,-0.16){1}{\line(1,0){0.41}}
\multiput(112.78,3.28)(0.4,-0.17){1}{\line(1,0){0.4}}
\multiput(112.39,3.46)(0.4,-0.18){1}{\line(1,0){0.4}}
\multiput(112,3.64)(0.19,-0.09){2}{\line(1,0){0.19}}
\multiput(111.62,3.83)(0.19,-0.1){2}{\line(1,0){0.19}}
\multiput(111.25,4.03)(0.18,-0.1){2}{\line(1,0){0.18}}
\multiput(110.89,4.23)(0.18,-0.1){2}{\line(1,0){0.18}}
\multiput(110.54,4.44)(0.18,-0.1){2}{\line(1,0){0.18}}
\multiput(110.2,4.66)(0.17,-0.11){2}{\line(1,0){0.17}}
\multiput(109.87,4.88)(0.17,-0.11){2}{\line(1,0){0.17}}
\multiput(109.55,5.1)(0.16,-0.11){2}{\line(1,0){0.16}}
\multiput(109.24,5.33)(0.15,-0.12){2}{\line(1,0){0.15}}
\multiput(108.94,5.57)(0.15,-0.12){2}{\line(1,0){0.15}}
\multiput(108.65,5.81)(0.14,-0.12){2}{\line(1,0){0.14}}
\multiput(108.38,6.06)(0.14,-0.12){2}{\line(1,0){0.14}}
\multiput(108.11,6.31)(0.13,-0.13){2}{\line(1,0){0.13}}
\multiput(107.86,6.56)(0.13,-0.13){2}{\line(0,-1){0.13}}
\multiput(107.62,6.82)(0.12,-0.13){2}{\line(0,-1){0.13}}
\multiput(107.4,7.08)(0.11,-0.13){2}{\line(0,-1){0.13}}
\multiput(107.18,7.35)(0.11,-0.13){2}{\line(0,-1){0.13}}
\multiput(106.98,7.62)(0.1,-0.13){2}{\line(0,-1){0.13}}
\multiput(106.79,7.89)(0.09,-0.14){2}{\line(0,-1){0.14}}
\multiput(106.61,8.17)(0.18,-0.28){1}{\line(0,-1){0.28}}
\multiput(106.45,8.45)(0.16,-0.28){1}{\line(0,-1){0.28}}
\multiput(106.3,8.73)(0.15,-0.28){1}{\line(0,-1){0.28}}
\multiput(106.16,9.01)(0.14,-0.28){1}{\line(0,-1){0.28}}
\multiput(106.04,9.3)(0.12,-0.29){1}{\line(0,-1){0.29}}
\multiput(105.93,9.59)(0.11,-0.29){1}{\line(0,-1){0.29}}
\multiput(105.83,9.88)(0.1,-0.29){1}{\line(0,-1){0.29}}
\multiput(105.75,10.17)(0.08,-0.29){1}{\line(0,-1){0.29}}
\multiput(105.68,10.46)(0.07,-0.29){1}{\line(0,-1){0.29}}
\multiput(105.62,10.75)(0.06,-0.29){1}{\line(0,-1){0.29}}
\multiput(105.58,11.05)(0.04,-0.29){1}{\line(0,-1){0.29}}
\multiput(105.55,11.34)(0.03,-0.3){1}{\line(0,-1){0.3}}
\multiput(105.54,11.64)(0.01,-0.3){1}{\line(0,-1){0.3}}
\put(105.54,11.64){\line(0,1){0.3}}
\multiput(105.54,11.93)(0.01,0.3){1}{\line(0,1){0.3}}
\multiput(105.55,12.23)(0.03,0.3){1}{\line(0,1){0.3}}
\multiput(105.58,12.52)(0.04,0.29){1}{\line(0,1){0.29}}
\multiput(105.62,12.82)(0.06,0.29){1}{\line(0,1){0.29}}
\multiput(105.68,13.11)(0.07,0.29){1}{\line(0,1){0.29}}
\multiput(105.75,13.4)(0.08,0.29){1}{\line(0,1){0.29}}
\multiput(105.83,13.7)(0.1,0.29){1}{\line(0,1){0.29}}
\multiput(105.93,13.99)(0.11,0.29){1}{\line(0,1){0.29}}
\multiput(106.04,14.27)(0.12,0.29){1}{\line(0,1){0.29}}
\multiput(106.16,14.56)(0.14,0.28){1}{\line(0,1){0.28}}
\multiput(106.3,14.84)(0.15,0.28){1}{\line(0,1){0.28}}
\multiput(106.45,15.13)(0.16,0.28){1}{\line(0,1){0.28}}
\multiput(106.61,15.4)(0.18,0.28){1}{\line(0,1){0.28}}
\multiput(106.79,15.68)(0.09,0.14){2}{\line(0,1){0.14}}
\multiput(106.98,15.95)(0.1,0.13){2}{\line(0,1){0.13}}
\multiput(107.18,16.22)(0.11,0.13){2}{\line(0,1){0.13}}
\multiput(107.4,16.49)(0.11,0.13){2}{\line(0,1){0.13}}
\multiput(107.62,16.75)(0.12,0.13){2}{\line(0,1){0.13}}
\multiput(107.86,17.01)(0.13,0.13){2}{\line(0,1){0.13}}
\multiput(108.11,17.27)(0.13,0.13){2}{\line(1,0){0.13}}
\multiput(108.38,17.52)(0.14,0.12){2}{\line(1,0){0.14}}
\multiput(108.65,17.76)(0.14,0.12){2}{\line(1,0){0.14}}
\multiput(108.94,18)(0.15,0.12){2}{\line(1,0){0.15}}
\multiput(109.24,18.24)(0.15,0.12){2}{\line(1,0){0.15}}
\multiput(109.55,18.47)(0.16,0.11){2}{\line(1,0){0.16}}
\multiput(109.87,18.7)(0.17,0.11){2}{\line(1,0){0.17}}
\multiput(110.2,18.92)(0.17,0.11){2}{\line(1,0){0.17}}
\multiput(110.54,19.13)(0.18,0.1){2}{\line(1,0){0.18}}
\multiput(110.89,19.34)(0.18,0.1){2}{\line(1,0){0.18}}
\multiput(111.25,19.54)(0.18,0.1){2}{\line(1,0){0.18}}
\multiput(111.62,19.74)(0.19,0.1){2}{\line(1,0){0.19}}
\multiput(112,19.93)(0.19,0.09){2}{\line(1,0){0.19}}
\multiput(112.39,20.11)(0.4,0.18){1}{\line(1,0){0.4}}
\multiput(112.78,20.29)(0.4,0.17){1}{\line(1,0){0.4}}
\multiput(113.19,20.46)(0.41,0.16){1}{\line(1,0){0.41}}
\multiput(113.6,20.63)(0.42,0.16){1}{\line(1,0){0.42}}
\multiput(114.02,20.78)(0.43,0.15){1}{\line(1,0){0.43}}
\multiput(114.45,20.93)(0.43,0.14){1}{\line(1,0){0.43}}
\multiput(114.88,21.08)(0.44,0.14){1}{\line(1,0){0.44}}
\multiput(115.32,21.21)(0.45,0.13){1}{\line(1,0){0.45}}
\multiput(115.77,21.34)(0.45,0.12){1}{\line(1,0){0.45}}
\multiput(116.22,21.46)(0.46,0.11){1}{\line(1,0){0.46}}
\multiput(116.68,21.58)(0.46,0.11){1}{\line(1,0){0.46}}
\multiput(117.14,21.68)(0.47,0.1){1}{\line(1,0){0.47}}
\multiput(117.61,21.78)(0.47,0.09){1}{\line(1,0){0.47}}
\multiput(118.08,21.87)(0.48,0.08){1}{\line(1,0){0.48}}
\multiput(118.56,21.95)(0.48,0.07){1}{\line(1,0){0.48}}
\multiput(119.04,22.02)(0.48,0.07){1}{\line(1,0){0.48}}
\multiput(119.52,22.09)(0.49,0.06){1}{\line(1,0){0.49}}
\multiput(120.01,22.15)(0.49,0.05){1}{\line(1,0){0.49}}
\multiput(120.5,22.2)(0.49,0.04){1}{\line(1,0){0.49}}
\multiput(120.99,22.24)(0.49,0.03){1}{\line(1,0){0.49}}
\multiput(121.48,22.27)(0.49,0.02){1}{\line(1,0){0.49}}
\multiput(121.98,22.3)(0.5,0.02){1}{\line(1,0){0.5}}
\multiput(122.47,22.31)(0.5,0.01){1}{\line(1,0){0.5}}
\put(122.97,22.32){\line(1,0){0.5}}
\multiput(123.46,22.32)(0.5,-0.01){1}{\line(1,0){0.5}}
\multiput(123.96,22.31)(0.5,-0.02){1}{\line(1,0){0.5}}
\multiput(124.45,22.3)(0.49,-0.02){1}{\line(1,0){0.49}}
\multiput(124.95,22.27)(0.49,-0.03){1}{\line(1,0){0.49}}
\multiput(125.44,22.24)(0.49,-0.04){1}{\line(1,0){0.49}}
\multiput(125.93,22.2)(0.49,-0.05){1}{\line(1,0){0.49}}
\multiput(126.42,22.15)(0.49,-0.06){1}{\line(1,0){0.49}}
\multiput(126.91,22.09)(0.48,-0.07){1}{\line(1,0){0.48}}
\multiput(127.39,22.02)(0.48,-0.07){1}{\line(1,0){0.48}}
\multiput(127.87,21.95)(0.48,-0.08){1}{\line(1,0){0.48}}
\multiput(128.35,21.87)(0.47,-0.09){1}{\line(1,0){0.47}}
\multiput(128.82,21.78)(0.47,-0.1){1}{\line(1,0){0.47}}
\multiput(129.29,21.68)(0.46,-0.11){1}{\line(1,0){0.46}}
\multiput(129.75,21.58)(0.46,-0.11){1}{\line(1,0){0.46}}
\multiput(130.21,21.46)(0.45,-0.12){1}{\line(1,0){0.45}}
\multiput(130.66,21.34)(0.45,-0.13){1}{\line(1,0){0.45}}
\multiput(131.11,21.21)(0.44,-0.14){1}{\line(1,0){0.44}}
\multiput(131.55,21.08)(0.43,-0.14){1}{\line(1,0){0.43}}
\multiput(131.98,20.93)(0.43,-0.15){1}{\line(1,0){0.43}}
\multiput(132.41,20.78)(0.42,-0.16){1}{\line(1,0){0.42}}
\multiput(132.83,20.63)(0.41,-0.16){1}{\line(1,0){0.41}}
\multiput(133.24,20.46)(0.4,-0.17){1}{\line(1,0){0.4}}
\multiput(133.65,20.29)(0.4,-0.18){1}{\line(1,0){0.4}}
\multiput(134.04,20.11)(0.19,-0.09){2}{\line(1,0){0.19}}
\multiput(134.43,19.93)(0.19,-0.1){2}{\line(1,0){0.19}}
\multiput(134.81,19.74)(0.18,-0.1){2}{\line(1,0){0.18}}
\multiput(135.18,19.54)(0.18,-0.1){2}{\line(1,0){0.18}}
\multiput(135.54,19.34)(0.18,-0.1){2}{\line(1,0){0.18}}
\multiput(135.89,19.13)(0.17,-0.11){2}{\line(1,0){0.17}}
\multiput(136.23,18.92)(0.17,-0.11){2}{\line(1,0){0.17}}
\multiput(136.56,18.7)(0.16,-0.11){2}{\line(1,0){0.16}}
\multiput(136.88,18.47)(0.15,-0.12){2}{\line(1,0){0.15}}
\multiput(137.19,18.24)(0.15,-0.12){2}{\line(1,0){0.15}}
\multiput(137.49,18)(0.14,-0.12){2}{\line(1,0){0.14}}
\multiput(137.77,17.76)(0.14,-0.12){2}{\line(1,0){0.14}}
\multiput(138.05,17.52)(0.13,-0.13){2}{\line(1,0){0.13}}
\multiput(138.31,17.27)(0.13,-0.13){2}{\line(0,-1){0.13}}
\multiput(138.57,17.01)(0.12,-0.13){2}{\line(0,-1){0.13}}
\multiput(138.81,16.75)(0.11,-0.13){2}{\line(0,-1){0.13}}
\multiput(139.03,16.49)(0.11,-0.13){2}{\line(0,-1){0.13}}
\multiput(139.25,16.22)(0.1,-0.13){2}{\line(0,-1){0.13}}
\multiput(139.45,15.95)(0.09,-0.14){2}{\line(0,-1){0.14}}
\multiput(139.64,15.68)(0.18,-0.28){1}{\line(0,-1){0.28}}
\multiput(139.82,15.4)(0.16,-0.28){1}{\line(0,-1){0.28}}
\multiput(139.98,15.13)(0.15,-0.28){1}{\line(0,-1){0.28}}
\multiput(140.13,14.84)(0.14,-0.28){1}{\line(0,-1){0.28}}
\multiput(140.27,14.56)(0.12,-0.29){1}{\line(0,-1){0.29}}
\multiput(140.39,14.27)(0.11,-0.29){1}{\line(0,-1){0.29}}
\multiput(140.5,13.99)(0.1,-0.29){1}{\line(0,-1){0.29}}
\multiput(140.6,13.7)(0.08,-0.29){1}{\line(0,-1){0.29}}
\multiput(140.68,13.4)(0.07,-0.29){1}{\line(0,-1){0.29}}
\multiput(140.75,13.11)(0.06,-0.29){1}{\line(0,-1){0.29}}
\multiput(140.81,12.82)(0.04,-0.29){1}{\line(0,-1){0.29}}
\multiput(140.85,12.52)(0.03,-0.3){1}{\line(0,-1){0.3}}
\multiput(140.88,12.23)(0.01,-0.3){1}{\line(0,-1){0.3}}
\linethickness{0.4mm}
\put(23.57,10.71){\line(1,0){23.39}}
\linethickness{0.3mm}
\qbezier(62.86,10.89)(70.29,15.97)(72.5,12.01)
\qbezier(72.5,12.01)(74.71,8.05)(78.93,11.25)
\qbezier(78.93,11.25)(83.74,15.21)(86.07,13.35)
\qbezier(86.07,13.35)(88.4,11.48)(88.39,10.89)
\put(88.39,10.89){\line(1,0){6.79}}
\put(95.18,10.89){\vector(1,0){0.12}}
\put(34.64,14.46){\makebox(0,0)[cc]{2}}
\end{picture}
\caption{The morphisms induced by $L$ on fibers of $\pi : \mathcal{C}_{0,\vec{c}} \rightarrow\overline{\mathcal{M}}_{0,\vec{c}}$.}
\label{fig:contract}
\end{center}
\end{figure}

\begin{proof}
We first observe that on each fiber $L$ has vanishing higher cohomology.   Indeed, a fiber is a moduli-stable curve $C$ and $L|_C = \omega_C^{-1}$ so by Serre duality $h^1(L|_C) = h^0(\omega_C^{\otimes 2}) = 0$.  Now $L$ has degree 1 on extremal components and degree 0 on inner components of a singular curve and it has degree 2 on a nonsingular curve (as is indicated in Figure \ref{fig:contract}) so as long as $L$ is basepoint-free it induces a morphism which contracts inner components.  By the vanishing cohomology observation it is enough to check that $L$ is relatively basepoint-free, i.e. that $L|_C$ is basepoint-free for each fiber $C$.  But if $C\cong \mathbb{P}^1$ then $L|_C\cong\mathcal{O}_{\mathbb{P}^1}(2)$ so it is obvious and if $C$ is a chain of $\mathbb{P}^1$s then on each extremal component $C'\subset C$ we have $L|_C'\cong\mathcal{O}_{\mathbb{P}^1}(1)$ and on each inner component $C''\subset C$ we have $L|_{C''}\cong\mathcal{O}_{\mathbb{P}^1}$ so it is also clear: global sections are constant on the inner components and do not simultaneously vanish at any points of the extremal components.  Since $\text{deg}(L|_C)=2$ the image of each curve under the morphism induced by $L$ has degree 2, and it is mapped to $\mathbb{P}^2$ since $\text{dim }\Gamma(C,L|_C) = 3$.  Indeed, Riemann-Roch says $\chi(L|_C) = \text{deg}(L|_C) + 1 - p_a,$ but $p_a=0$ and $\chi(L|_C) = h^0(L|_C)$ since $h^1(L|_C)=0$.

We have shown that $L$ maps each moduli-stable curve to a degree 2 curve in $\mathbb{P}^2$, i.e. a conic.  To finish the proof it only remains to verify the claim about GIT stability.  Recall that we have normalized so that $\frac{c+\gamma}{3}=1$.  Thus, by Theorem \ref{thm:stability}, GIT stability for nodal conics is characterized by the following three conditions: 
\begin{enumerate}
\item there is $<  c - 2$ weight at the node 
\item there is $< 1$ weight at any smooth point, and 
\item there is $> 1$ weight on each component away from the node
\end{enumerate}
But these follow immediately from the fact that on the original moduli-stable curve 
\begin{enumerate}
\item the extremal components each have $> 1$ weight leaving $< c - 2$ weight remaining on the inner components---which are precisely the components that get contracted to the node of the conic (recall Figure \ref{fig:contract})
\item the smooth points of the conic come from smooth points on the extremal components of the origin curve, so there is $< 1$ weight at any such point (since we are assuming $\vec{c}$ lies in the interior of $[0,1]^n$), and 
\item there is $> 1$ weight on the extremal components, as we have already noted.
\end{enumerate}
A nonsingular rational curve gets embedded as a nonsingular GIT-stable conic because the only condition to check is that there is $<1$ at each point.  This shows that the image under $L$ of any stable rational curve is a stable conic, but it is easy to see that any stable conic can be obtained this way.
\end{proof}

We can use Proposition \ref{prop:linebundle} to complete the proof of Theorem \ref{thm:contraction}.  The line bundle $L$ maps $\mathcal{C}_{0,\vec{c}}$ with its universal sections to a flat family of pointed curves over $\overline{\mathcal{M}}_{0,\vec{c}}$ and it embeds this family in a $\mathbb{P}^2$-bundle.  This $\mathbb{P}^2$-bundle has an associated principal $\mathtt{PGL}_3$-bundle $P$.  Pulling back the $\mathbb{P}^2$-bundle along the structure morphism $P \rightarrow \overline{\mathcal{M}}_{0,\vec{c}}$ trivializes it so we get a flat family of pointed conics over $P$ embedded in the trivial $\mathbb{P}^2$-bundle over $P$.  Now $\mathbb{P}^5$ is the Hilbert scheme of conics, and $\mathtt{Con}(n)$ is the Hilbert scheme of $n$-pointed conics, so by the universal property of Hilbert schemes this family over $P$ induces a morphism $P \rightarrow \mathtt{Con}(n)$.  But this morphism factors through the stable locus $\mathtt{Con}(n)_{s}$ because by Proposition \ref{prop:linebundle} the image under $L$ of a moduli-stable curve is GIT-stable.  Since $\mathtt{Con}(n)\quotient_{(\gamma,\vec{c})}\mathtt{SL}_3$ is the categorical quotient of $\mathtt{Con}(n)_s$ by $\mathtt{SL}_3$ (recall that by assumption stability and semistability coincide) and $\mathtt{SL}_3$ acts through $\mathtt{PGL}_3$ via the canonical isogeny $\mathtt{SL}_3 \rightarrow \mathtt{PGL}_3$ we see that the composition \[P \rightarrow \mathtt{Con}(n)_{s} \rightarrow \mathtt{Con}(n)\quotient_{\vec{w}~}\mathtt{SL}_3\] is $\mathtt{PGL}_3$-invariant so it must factor through the categorical quotient of $P$ by $\mathtt{PGL}_3$, which by the definition of principal bundle is $\overline{\mathcal{M}}_{0,\vec{c}}$.  This means precisely that there is a morphism $\overline{\mathcal{M}}_{0,\vec{c}} \rightarrow \mathtt{Con}(n)\quotient_{\vec{w}~}\mathtt{SL}_3$, thus concluding the proof of Theorem \ref{thm:contraction}.

\section{Examples, further properties, and applications}

In this section we explore some properties and manifestations of the conic compactifications constructed in this paper.

\subsection{Semistable reduction}

The morphism described in Theorem \ref{thm:contraction} and its proof can be used to study semistable reduction in the spaces $\mathtt{Con}(n)\quotient\mathtt{SL}_3$.  Any 1-parameter family of semistable configurations of $n$ points on a conic must have a semistable limit since the GIT quotient is proper.  If the conics are nonsingular we can identify them with $\mathbb{P}^1$ and the limit may be computed by first finding the limit as a stable curve in $\overline{\mathcal{M}}_{0,n}$ and then looking at the image of this curve under the morphism $\overline{\mathcal{M}}_{0,n} \rightarrow \mathtt{Con}(n)\quotient\mathtt{SL}_3$.  Figure \ref{fig:limit} shows an example with $\gamma=\frac{1}{8},\vec{c}=(\frac{5}{8},\frac{5}{8},\frac{5}{8},\frac{5}{8},\frac{2}{8},\frac{1}{8})$.  

\begin{figure}
\begin{center}
\begin{picture}(101.43,66.61)(0,0)
\linethickness{0.3mm}
\put(21.07,53.74){\line(0,1){0.38}}
\multiput(21.04,53.36)(0.03,0.38){1}{\line(0,1){0.38}}
\multiput(20.99,52.98)(0.05,0.38){1}{\line(0,1){0.38}}
\multiput(20.91,52.61)(0.08,0.38){1}{\line(0,1){0.38}}
\multiput(20.81,52.23)(0.1,0.37){1}{\line(0,1){0.37}}
\multiput(20.68,51.87)(0.13,0.37){1}{\line(0,1){0.37}}
\multiput(20.53,51.5)(0.15,0.36){1}{\line(0,1){0.36}}
\multiput(20.36,51.15)(0.18,0.36){1}{\line(0,1){0.36}}
\multiput(20.16,50.8)(0.1,0.17){2}{\line(0,1){0.17}}
\multiput(19.93,50.46)(0.11,0.17){2}{\line(0,1){0.17}}
\multiput(19.69,50.13)(0.12,0.17){2}{\line(0,1){0.17}}
\multiput(19.42,49.8)(0.13,0.16){2}{\line(0,1){0.16}}
\multiput(19.13,49.49)(0.14,0.16){2}{\line(0,1){0.16}}
\multiput(18.82,49.19)(0.1,0.1){3}{\line(1,0){0.1}}
\multiput(18.49,48.9)(0.17,0.14){2}{\line(1,0){0.17}}
\multiput(18.14,48.63)(0.17,0.14){2}{\line(1,0){0.17}}
\multiput(17.77,48.36)(0.18,0.13){2}{\line(1,0){0.18}}
\multiput(17.39,48.12)(0.19,0.12){2}{\line(1,0){0.19}}
\multiput(16.99,47.88)(0.2,0.12){2}{\line(1,0){0.2}}
\multiput(16.57,47.67)(0.21,0.11){2}{\line(1,0){0.21}}
\multiput(16.14,47.47)(0.21,0.1){2}{\line(1,0){0.21}}
\multiput(15.7,47.28)(0.22,0.09){2}{\line(1,0){0.22}}
\multiput(15.25,47.11)(0.45,0.17){1}{\line(1,0){0.45}}
\multiput(14.78,46.97)(0.47,0.15){1}{\line(1,0){0.47}}
\multiput(14.3,46.83)(0.48,0.13){1}{\line(1,0){0.48}}
\multiput(13.82,46.72)(0.48,0.11){1}{\line(1,0){0.48}}
\multiput(13.33,46.62)(0.49,0.1){1}{\line(1,0){0.49}}
\multiput(12.83,46.55)(0.5,0.08){1}{\line(1,0){0.5}}
\multiput(12.33,46.49)(0.5,0.06){1}{\line(1,0){0.5}}
\multiput(11.83,46.45)(0.5,0.04){1}{\line(1,0){0.5}}
\multiput(11.32,46.43)(0.51,0.02){1}{\line(1,0){0.51}}
\put(10.82,46.43){\line(1,0){0.51}}
\multiput(10.31,46.45)(0.51,-0.02){1}{\line(1,0){0.51}}
\multiput(9.81,46.49)(0.5,-0.04){1}{\line(1,0){0.5}}
\multiput(9.31,46.55)(0.5,-0.06){1}{\line(1,0){0.5}}
\multiput(8.81,46.62)(0.5,-0.08){1}{\line(1,0){0.5}}
\multiput(8.32,46.72)(0.49,-0.1){1}{\line(1,0){0.49}}
\multiput(7.84,46.83)(0.48,-0.11){1}{\line(1,0){0.48}}
\multiput(7.36,46.97)(0.48,-0.13){1}{\line(1,0){0.48}}
\multiput(6.89,47.11)(0.47,-0.15){1}{\line(1,0){0.47}}
\multiput(6.44,47.28)(0.45,-0.17){1}{\line(1,0){0.45}}
\multiput(6,47.47)(0.22,-0.09){2}{\line(1,0){0.22}}
\multiput(5.57,47.67)(0.21,-0.1){2}{\line(1,0){0.21}}
\multiput(5.15,47.88)(0.21,-0.11){2}{\line(1,0){0.21}}
\multiput(4.75,48.12)(0.2,-0.12){2}{\line(1,0){0.2}}
\multiput(4.37,48.36)(0.19,-0.12){2}{\line(1,0){0.19}}
\multiput(4,48.63)(0.18,-0.13){2}{\line(1,0){0.18}}
\multiput(3.65,48.9)(0.17,-0.14){2}{\line(1,0){0.17}}
\multiput(3.32,49.19)(0.17,-0.14){2}{\line(1,0){0.17}}
\multiput(3.01,49.49)(0.1,-0.1){3}{\line(1,0){0.1}}
\multiput(2.72,49.8)(0.14,-0.16){2}{\line(0,-1){0.16}}
\multiput(2.45,50.13)(0.13,-0.16){2}{\line(0,-1){0.16}}
\multiput(2.21,50.46)(0.12,-0.17){2}{\line(0,-1){0.17}}
\multiput(1.98,50.8)(0.11,-0.17){2}{\line(0,-1){0.17}}
\multiput(1.78,51.15)(0.1,-0.17){2}{\line(0,-1){0.17}}
\multiput(1.61,51.5)(0.18,-0.36){1}{\line(0,-1){0.36}}
\multiput(1.46,51.87)(0.15,-0.36){1}{\line(0,-1){0.36}}
\multiput(1.33,52.23)(0.13,-0.37){1}{\line(0,-1){0.37}}
\multiput(1.23,52.61)(0.1,-0.37){1}{\line(0,-1){0.37}}
\multiput(1.15,52.98)(0.08,-0.38){1}{\line(0,-1){0.38}}
\multiput(1.1,53.36)(0.05,-0.38){1}{\line(0,-1){0.38}}
\multiput(1.07,53.74)(0.03,-0.38){1}{\line(0,-1){0.38}}
\put(1.07,53.74){\line(0,1){0.38}}
\multiput(1.07,54.12)(0.03,0.38){1}{\line(0,1){0.38}}
\multiput(1.1,54.5)(0.05,0.38){1}{\line(0,1){0.38}}
\multiput(1.15,54.88)(0.08,0.38){1}{\line(0,1){0.38}}
\multiput(1.23,55.25)(0.1,0.37){1}{\line(0,1){0.37}}
\multiput(1.33,55.63)(0.13,0.37){1}{\line(0,1){0.37}}
\multiput(1.46,55.99)(0.15,0.36){1}{\line(0,1){0.36}}
\multiput(1.61,56.36)(0.18,0.36){1}{\line(0,1){0.36}}
\multiput(1.78,56.71)(0.1,0.17){2}{\line(0,1){0.17}}
\multiput(1.98,57.06)(0.11,0.17){2}{\line(0,1){0.17}}
\multiput(2.21,57.4)(0.12,0.17){2}{\line(0,1){0.17}}
\multiput(2.45,57.73)(0.13,0.16){2}{\line(0,1){0.16}}
\multiput(2.72,58.06)(0.14,0.16){2}{\line(0,1){0.16}}
\multiput(3.01,58.37)(0.1,0.1){3}{\line(1,0){0.1}}
\multiput(3.32,58.67)(0.17,0.14){2}{\line(1,0){0.17}}
\multiput(3.65,58.96)(0.17,0.14){2}{\line(1,0){0.17}}
\multiput(4,59.23)(0.18,0.13){2}{\line(1,0){0.18}}
\multiput(4.37,59.5)(0.19,0.12){2}{\line(1,0){0.19}}
\multiput(4.75,59.74)(0.2,0.12){2}{\line(1,0){0.2}}
\multiput(5.15,59.98)(0.21,0.11){2}{\line(1,0){0.21}}
\multiput(5.57,60.19)(0.21,0.1){2}{\line(1,0){0.21}}
\multiput(6,60.39)(0.22,0.09){2}{\line(1,0){0.22}}
\multiput(6.44,60.58)(0.45,0.17){1}{\line(1,0){0.45}}
\multiput(6.89,60.75)(0.47,0.15){1}{\line(1,0){0.47}}
\multiput(7.36,60.89)(0.48,0.13){1}{\line(1,0){0.48}}
\multiput(7.84,61.03)(0.48,0.11){1}{\line(1,0){0.48}}
\multiput(8.32,61.14)(0.49,0.1){1}{\line(1,0){0.49}}
\multiput(8.81,61.24)(0.5,0.08){1}{\line(1,0){0.5}}
\multiput(9.31,61.31)(0.5,0.06){1}{\line(1,0){0.5}}
\multiput(9.81,61.37)(0.5,0.04){1}{\line(1,0){0.5}}
\multiput(10.31,61.41)(0.51,0.02){1}{\line(1,0){0.51}}
\put(10.82,61.43){\line(1,0){0.51}}
\multiput(11.32,61.43)(0.51,-0.02){1}{\line(1,0){0.51}}
\multiput(11.83,61.41)(0.5,-0.04){1}{\line(1,0){0.5}}
\multiput(12.33,61.37)(0.5,-0.06){1}{\line(1,0){0.5}}
\multiput(12.83,61.31)(0.5,-0.08){1}{\line(1,0){0.5}}
\multiput(13.33,61.24)(0.49,-0.1){1}{\line(1,0){0.49}}
\multiput(13.82,61.14)(0.48,-0.11){1}{\line(1,0){0.48}}
\multiput(14.3,61.03)(0.48,-0.13){1}{\line(1,0){0.48}}
\multiput(14.78,60.89)(0.47,-0.15){1}{\line(1,0){0.47}}
\multiput(15.25,60.75)(0.45,-0.17){1}{\line(1,0){0.45}}
\multiput(15.7,60.58)(0.22,-0.09){2}{\line(1,0){0.22}}
\multiput(16.14,60.39)(0.21,-0.1){2}{\line(1,0){0.21}}
\multiput(16.57,60.19)(0.21,-0.11){2}{\line(1,0){0.21}}
\multiput(16.99,59.98)(0.2,-0.12){2}{\line(1,0){0.2}}
\multiput(17.39,59.74)(0.19,-0.12){2}{\line(1,0){0.19}}
\multiput(17.77,59.5)(0.18,-0.13){2}{\line(1,0){0.18}}
\multiput(18.14,59.23)(0.17,-0.14){2}{\line(1,0){0.17}}
\multiput(18.49,58.96)(0.17,-0.14){2}{\line(1,0){0.17}}
\multiput(18.82,58.67)(0.1,-0.1){3}{\line(1,0){0.1}}
\multiput(19.13,58.37)(0.14,-0.16){2}{\line(0,-1){0.16}}
\multiput(19.42,58.06)(0.13,-0.16){2}{\line(0,-1){0.16}}
\multiput(19.69,57.73)(0.12,-0.17){2}{\line(0,-1){0.17}}
\multiput(19.93,57.4)(0.11,-0.17){2}{\line(0,-1){0.17}}
\multiput(20.16,57.06)(0.1,-0.17){2}{\line(0,-1){0.17}}
\multiput(20.36,56.71)(0.18,-0.36){1}{\line(0,-1){0.36}}
\multiput(20.53,56.36)(0.15,-0.36){1}{\line(0,-1){0.36}}
\multiput(20.68,55.99)(0.13,-0.37){1}{\line(0,-1){0.37}}
\multiput(20.81,55.63)(0.1,-0.37){1}{\line(0,-1){0.37}}
\multiput(20.91,55.25)(0.08,-0.38){1}{\line(0,-1){0.38}}
\multiput(20.99,54.88)(0.05,-0.38){1}{\line(0,-1){0.38}}
\multiput(21.04,54.5)(0.03,-0.38){1}{\line(0,-1){0.38}}
\linethickness{0.3mm}
\put(30.36,53.39){\line(1,0){25}}
\linethickness{0.3mm}
\put(78.75,42.68){\line(0,1){20}}
\linethickness{0.3mm}
\put(71.43,48.21){\line(1,0){30}}
\linethickness{0.3mm}
\put(93.39,42.86){\line(0,1){20}}
\linethickness{0.3mm}
\multiput(26.25,25.89)(0.12,-0.12){208}{\line(1,0){0.12}}
\linethickness{0.3mm}
\multiput(36.25,0.89)(0.12,0.12){208}{\line(1,0){0.12}}
\put(25.36,54.29){\makebox(0,0)[cc]{$\cong$}}
\linethickness{0.3mm}
\qbezier(60.54,52.5)(62.56,54.64)(63.71,52.52)
\qbezier(63.71,52.52)(64.86,50.41)(66.61,52.14)
\qbezier(66.61,52.14)(68.6,54.35)(70.31,54.15)
\qbezier(70.31,54.15)(72.03,53.95)(72.14,53.75)
\linethickness{0.3mm}
\qbezier(15.48,40.03)(18.42,39.91)(17.58,37.66)
\qbezier(17.58,37.66)(16.73,35.41)(19.19,35.22)
\qbezier(19.19,35.22)(22.16,35.17)(23.14,33.74)
\qbezier(23.14,33.74)(24.12,32.31)(24.04,32.09)
\linethickness{0.3mm}
\multiput(64.64,31.07)(0.12,0.13){58}{\line(0,1){0.13}}
\put(64.64,31.07){\vector(-1,-1){0.12}}
\linethickness{0.3mm}
\multiput(70.54,40)(0.12,-0.12){19}{\line(0,-1){0.12}}
\linethickness{0.3mm}
\multiput(71.61,55)(0.12,-0.16){10}{\line(0,-1){0.16}}
\linethickness{0.3mm}
\multiput(71.43,52.5)(0.12,0.12){9}{\line(0,1){0.12}}
\linethickness{0.3mm}
\multiput(24.46,31.96)(0.12,0.54){3}{\line(0,1){0.54}}
\linethickness{0.3mm}
\put(22.5,31.96){\line(1,0){1.96}}
\put(5.18,59.64){\makebox(0,0)[cc]{*}}
\put(10.18,61.43){\makebox(0,0)[cc]{*}}
\put(3.21,49.29){\makebox(0,0)[cc]{*}}
\put(17.86,48.21){\makebox(0,0)[cc]{*}}
\put(7.68,46.96){\makebox(0,0)[cc]{*}}
\put(20.36,51.25){\makebox(0,0)[cc]{*}}
\put(33.39,53.21){\makebox(0,0)[cc]{*}}
\put(37.14,53.21){\makebox(0,0)[cc]{*}}
\put(41.79,53.21){\makebox(0,0)[cc]{*}}
\put(45.36,53.04){\makebox(0,0)[cc]{*}}
\put(50.36,53.04){\makebox(0,0)[cc]{*}}
\put(54.11,53.21){\makebox(0,0)[cc]{*}}
\put(78.39,57.14){\makebox(0,0)[cc]{*}}
\put(2.68,63.75){\makebox(0,0)[cc]{$\frac{5}{8}$}}
\put(10.18,66.61){\makebox(0,0)[cc]{$\frac{5}{8}$}}
\put(1.43,46.07){\makebox(0,0)[cc]{$\frac{1}{8}$}}
\put(6.43,43.75){\makebox(0,0)[cc]{$\frac{2}{8}$}}
\put(18.57,44.83){\makebox(0,0)[cc]{$\frac{5}{8}$}}
\put(22.5,48.39){\makebox(0,0)[cc]{$\frac{5}{8}$}}
\put(32.5,58.39){\makebox(0,0)[cc]{$\frac{5}{8}$}}
\put(37.5,58.21){\makebox(0,0)[cc]{$\frac{5}{8}$}}
\put(41.96,58.39){\makebox(0,0)[cc]{$\frac{1}{8}$}}
\put(45.71,58.21){\makebox(0,0)[cc]{$\frac{2}{8}$}}
\put(54.29,58.03){\makebox(0,0)[cc]{$\frac{5}{8}$}}
\put(50.36,58.22){\makebox(0,0)[cc]{$\frac{5}{8}$}}
\put(74.82,58.57){\makebox(0,0)[cc]{$\frac{5}{8}$}}
\put(74.82,52.5){\makebox(0,0)[cc]{$\frac{5}{8}$}}
\put(48.39,13.21){\makebox(0,0)[cc]{*}}
\put(78.57,52.68){\makebox(0,0)[cc]{*}}
\put(82.5,48.04){\makebox(0,0)[cc]{*}}
\put(88.21,47.86){\makebox(0,0)[cc]{*}}
\put(36.43,15.36){\makebox(0,0)[cc]{*}}
\put(93.21,57.5){\makebox(0,0)[cc]{*}}
\put(93.21,52.32){\makebox(0,0)[cc]{*}}
\put(32.5,19.64){\makebox(0,0)[cc]{*}}
\put(43.75,8.57){\makebox(0,0)[cc]{*}}
\put(53.04,17.68){\makebox(0,0)[cc]{*}}
\put(82.68,44.46){\makebox(0,0)[cc]{$\frac{1}{8}$}}
\put(88.57,44.28){\makebox(0,0)[cc]{$\frac{2}{8}$}}
\put(29.82,17.14){\makebox(0,0)[cc]{$\frac{5}{8}$}}
\put(97.14,52.14){\makebox(0,0)[cc]{$\frac{5}{8}$}}
\put(97.14,58.57){\makebox(0,0)[cc]{$\frac{5}{8}$}}
\put(55,15.36){\makebox(0,0)[cc]{$\frac{5}{8}$}}
\put(51.07,11.25){\makebox(0,0)[cc]{$\frac{5}{8}$}}
\put(34.11,13.75){\makebox(0,0)[cc]{$\frac{5}{8}$}}
\put(43.75,14.64){\makebox(0,0)[cc]{$\frac{3}{8}$}}
\linethickness{0.3mm}
\qbezier(7.16,58.24)(9.4,57.58)(10.25,58.43)
\qbezier(10.25,58.43)(11.1,59.29)(11.06,59.47)
\linethickness{0.3mm}
\multiput(7.25,58.27)(0.26,0.12){6}{\line(1,0){0.26}}
\linethickness{0.3mm}
\multiput(7.36,58.03)(0.12,-0.25){5}{\line(0,-1){0.25}}
\linethickness{0.3mm}
\qbezier(18.9,52.86)(16.6,52.5)(16.2,51.36)
\qbezier(16.2,51.36)(15.8,50.22)(15.92,50.07)
\linethickness{0.3mm}
\multiput(16.02,50)(0.27,0.13){5}{\line(1,0){0.27}}
\linethickness{0.3mm}
\multiput(14.86,50.77)(0.16,-0.12){7}{\line(1,0){0.16}}
\linethickness{0.3mm}
\qbezier(32.85,50.66)(34.82,49.42)(35.87,50.01)
\qbezier(35.87,50.01)(36.92,50.61)(36.93,50.79)
\linethickness{0.3mm}
\multiput(35.48,51.31)(0.47,-0.13){3}{\line(1,0){0.47}}
\linethickness{0.3mm}
\multiput(37.02,50.95)(0.12,-0.69){2}{\line(0,-1){0.69}}
\linethickness{0.3mm}
\qbezier(50.61,50.39)(52.59,49.15)(53.64,49.75)
\qbezier(53.64,49.75)(54.69,50.34)(54.7,50.53)
\linethickness{0.3mm}
\multiput(50.57,50.6)(1.48,0.11){1}{\line(1,0){1.48}}
\linethickness{0.3mm}
\multiput(50.5,50.24)(0.12,-0.69){2}{\line(0,-1){0.69}}
\end{picture}
\caption{An example of semistable reduction using the morphism $\overline{\mathcal{M}}_{0,n} \rightarrow \mathtt{Con}(n)\quotient\mathtt{SL}_3$ in Theorem \ref{thm:contraction}.}
\label{fig:limit}
\end{center}
\end{figure}
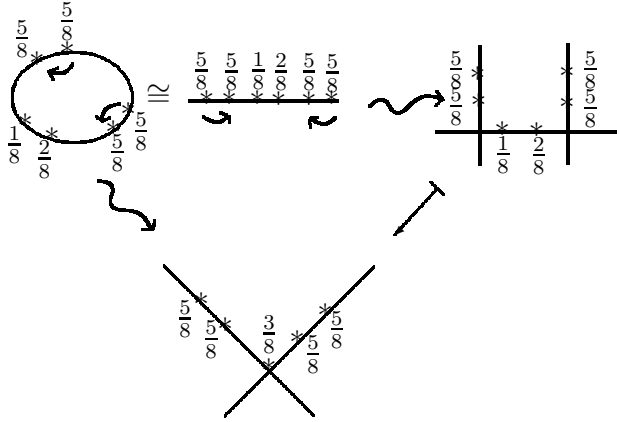

\subsection{Kontsevich-Boggi compactifications}

In \cite{Kont}, Kontsevich described certain topological modifications of the moduli spaces $\overline{\mathcal{M}}_{g,n}$ which for $g=0$ were given an algebraic and moduli-theoretic description by Boggi in \cite{Bogg}.  In the latter paper a rational pointed curve $(C,p_1,\ldots,p_n)$ is called \emph{$I$-stable}, where $I\subset \{1,\ldots,n\}$ is nonempty, if i) $C$ has at worst ordinary multiple points and the points $p_i$ with $i\in I$ avoid the singularities, ii) each component of $C$ has at least 3 special points (marked points or singularities) and at least 1 point indexed by $I$, and iii) any collection of points is allowed to collide as long as none of the points are indexed by $I$.  Boggi proves there is a normal projective variety, call it $\overline{\mathcal{M}}_{0,I}$, representing over $\text{Spec }\mathbb{Z}$ the moduli-functor of $I$-stable rational curves.  Moreover, he shows there is a sequence of birational morphisms \[\overline{\mathcal{M}}_{0,n} \rightarrow \overline{\mathcal{M}}_{0,[n]} \rightarrow \overline{\mathcal{M}}_{0,[n-1]} \rightarrow \cdots \rightarrow \overline{\mathcal{M}}_{0,[2]} \rightarrow \overline{\mathcal{M}}_{0,[1]} = \mathbb{P}^{n-3}\]
where $[j] := \{1,\ldots,j\}$.  Note that each stable curve parametrized by $\overline{\mathcal{M}}_{0,[j]}$ has at most $j$ components since each component must have a point indexed by $[j]$ and such points cannot lie on two components simultaneously.

The smallest Boggi space, $\overline{\mathcal{M}}_{0,[1]}$, can be constructed as an $\mathtt{SL}_2$ GIT quotient.  Indeed, this space parametrizes configurations of $n$ points on $\mathbb{P}^1$ such that $p_1$ is distinct from the others and the remaining $n-1$ points are supported in at least 2 points to ensure there are at least 3 special points.  Therefore, by taking a linearization such as $L=(1-\epsilon,\frac{1+\epsilon}{n-1},\ldots,\frac{1+\epsilon}{n-1})$ for small $\epsilon >0$ we get $(\mathbb{P}^1)^n\quotient_L\mathtt{SL}_2 \cong \overline{\mathcal{M}}_{0,[1]}$.  The next Boggi compactification, $\overline{\mathcal{M}}_{0,[2]}$, arises as an $\mathtt{SL}_3$ GIT quotient, namely \[\mathtt{Con}(n)\quotient_{(\gamma,\vec{c})}\mathtt{SL}_3 \cong \overline{\mathcal{M}}_{0,[2]}, \text{ where } (\gamma,\vec{c})=(3\epsilon,1-\epsilon,1-\epsilon,\frac{1-\epsilon}{n-2},\ldots,\frac{1-\epsilon}{n-2}).\]  Indeed, a nonsingular $[2]$-stable curve is a configuration of $n$ points on $\mathbb{P}^1$ such that $p_1$ and $p_2$ do not overlap with any points but the remaining $n-2$ can overlap---and a singular $[2]$-stable curve is a nodal curve with two components each marked by one of the heavy points $p_i$ ($i=1,2$) distinct from the rest as well as at least one other point away from the node.  This agrees with the GIT stability conditions prescribed by such a linearization.
 
\subsection{Contraction of $F$-curves}

The boundary of $\overline{\mathcal{M}}_{0,n}$ is stratified by topological type, and irreducible components of the 1-strata are called $F$-curves (or sometimes \emph{vital} curves).  The curves parametrized by an $F$-curve have one \emph{spine} which is a component with 4 special points, and up to 4 \emph{legs} which are chains of $\mathbb{P}^1$s attached to the spine at these points.  The cross-ratio of the 4 points on the spine traces a $\mathbb{P}^1$ in $\overline{\mathcal{M}}_{0,n}$.  $F$-curves are determined up to linear equivalence by a partition of $\{1,\ldots,n\}$ into 4 nonempty subsets---corresponding to the indices of the points on each leg.  The \emph{$F$-conjecture} is the statement that $F$-curves span the entire cone of curves, i.e. that every curve is linearly equivalent to a non-negative sum of $F$-curves.  This is known for $n\le 7$ in general and $n \le 24$ when considering an $S_n$-invariant analogue (see, e.g., \cite{GKM}).  

Given an $F$-curve corresponding to $\{1,\ldots,n\}=N_1\cup N_2 \cup N_3 \cup N_4$ and a weight vector $\vec{c} = (c_1,\ldots,c_n)$ we write $x_k = \sum_{i\in N_k}c_i$, so that $x_k$ measures the total weight on the $k^{\text{th}}$ leg.  We can assume without loss of generality $x_1 \le x_2 \le x_3 \le x_4$.  It is easy to see that the morphism $\overline{\mathcal{M}}_{0,n} \rightarrow \overline{\mathcal{M}}_{0,\vec{c}}$ contracts precisely those $F$-curves which satisfy $x_1 + x_2 + x_3 \le 1$ since this condition is necessary and sufficient for the spine to get contracted.  It is also not hard to see what remaining $F$-curves get contracted by the morphism $\overline{\mathcal{M}}_{0,\vec{c}} \rightarrow \mathtt{Con}(n)\quotient_{(\gamma,\vec{c})}\mathtt{SL}_3$, where $\gamma = 3 - \sum c_i$.

\begin{lemma}\label{F-curves}
The $F$-curves contracted by the morphism $\overline{\mathcal{M}}_{0,n} \rightarrow \mathtt{Con}(n)\quotient_{(\gamma,\vec{c})}\mathtt{SL}_3$ for $0 < \gamma < 1$ are precisely those satisfying $x_1 + x_2 + x_3 \le 1$ or $x_3 \ge 1$.
\end{lemma}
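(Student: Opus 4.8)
The plan is to analyze the two morphisms in the factorization $\overline{\mathcal{M}}_{0,n} \to \overline{\mathcal{M}}_{0,\vec{c}} \to \mathtt{Con}(n)\quotient_{(\gamma,\vec{c})}\mathtt{SL}_3$ of Theorem \ref{thm:contraction} separately: an $F$-curve is contracted by the composite precisely when it is contracted by the first (reduction) morphism, or when it survives the first but is contracted by the second (conic) morphism. The discussion preceding the lemma already records that the reduction $\overline{\mathcal{M}}_{0,n} \to \overline{\mathcal{M}}_{0,\vec{c}}$ contracts an $F$-curve iff its spine is contracted, which happens iff $x_1 + x_2 + x_3 \le 1$. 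I would dispose of this case at once and assume henceforth $x_1 + x_2 + x_3 > 1$, so that the spine survives in $\overline{\mathcal{M}}_{0,\vec{c}}$.

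The central step is to pin down the combinatorial type of the image of the $F$-curve in $\overline{\mathcal{M}}_{0,\vec{c}}$. Calling a leg \emph{heavy} if $x_k > 1$ and \emph{light} otherwise, the reduction collapses each light leg to a marked point of weight $x_k$ on the spine, while each heavy leg persists as a tail meeting the spine at a node. Since we are in the regime $0 < \gamma < 1$, Corollary \ref{cor:chamber} gives $2 < c < 3$; as three heavy legs would force $c > 3$, at most two legs are heavy, which is exactly why the curves of $\overline{\mathcal{M}}_{0,\vec{c}}$ are chains (the spine acquires at most two nodes). The key observation is that the four attachment points of the legs are the four points whose cross-ratio varies along the $F$-curve, so the fate of this cross-ratio under the conic morphism is governed by whether the spine is an interior or an extremal component of the chain.

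Next I would run the case analysis on the number of heavy legs, invoking Proposition \ref{prop:linebundle}, which says $L = \omega_\pi^{-1}$ embeds extremal components of a chain (and embeds a nonsingular curve outright) while contracting interior components. If $x_3 > 1$ then legs $3$ and $4$ are both heavy, the spine carries two nodes and two marked points and is therefore an interior component of a three-component chain; the conic morphism contracts it, destroying the cross-ratio and hence contracting the $F$-curve. If instead $x_3 \le 1$ (so at most one heavy leg) while $x_1 + x_2 + x_3 > 1$, then the spine is either the entire nonsingular curve (no heavy legs) or an extremal component carrying one node and three marked points (leg $4$ heavy, legs $1,2,3$ light, spine stable since $x_1 + x_2 + x_3 > 1$). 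In both situations Proposition \ref{prop:linebundle} embeds the spine---as the nonsingular conic, or as one line of the nodal conic---so the four marked and nodal points on it retain their cross-ratio, a projective invariant distinguishing their $\mathtt{SL}_3$-orbits; the image in the GIT quotient therefore varies and the $F$-curve is not contracted.

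Assembling the cases shows that the $F$-curve is contracted by the composite exactly when $x_1 + x_2 + x_3 \le 1$ or $x_3 > 1$, and these are disjoint since $x_3 > 1$ already forces $x_1 + x_2 + x_3 > 1$. I expect the main obstacle to be the bookkeeping that identifies the spine as an interior versus extremal component of the chain and, in the extremal case, the verification that the conic morphism genuinely records the varying cross-ratio rather than collapsing it---this rests on checking that $L$ restricts to a degree-one (hence embedding) linear system on the spine and that cross-ratio is an $\mathtt{SL}_3$-invariant of the resulting planar configuration.
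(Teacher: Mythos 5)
Your proposal is correct and follows essentially the same route as the paper's proof: the paper likewise observes that $x_3>1$ forces the two heaviest legs to become the components of a nodal conic with the spine contracted to the node, while $x_3\le 1$ with $x_1+x_2+x_3>1$ leaves the spine un-contracted as a component of the resulting conic, so the cross-ratio survives. Your write-up merely makes explicit the interior-versus-extremal dichotomy from Proposition \ref{prop:linebundle} that the paper's two-sentence argument leaves implicit.
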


\begin{proof} If $x_3 > 1$ then the two heaviest legs become the components of a nodal conic and the spine is contracted to the node (and if $x_3 = 1$ then GIT semistable equivalence identifies this with the preceding case, hence also contracts the spine).  Conversely, if $x_3 \le 1$ but $x_1 + x_2 + x_3 > 1$ then the three lightest legs are each contracted to a point in Hassett's space and then the spine remains un-contracted as one of possibly two components in the resulting conic.
\end{proof}

\subsection{Inverse limits}

In \cite{Kapr} it is shown, using the general machinery of Chow quotients for torus actions and the Gelfand-MacPherson isomorphism $(\mathbb{P}^1)^n\quotient\mathtt{SL}_2 \cong \mathtt{Gr}(2,n)\quotient(\mathbb{C}^*)^n$, that $\overline{\mathcal{M}}_{0,n}$ is the inverse limit of all GIT quotients $(\mathbb{P}^1)^n\quotient\mathtt{SL}_2$.  Assuming the $F$-conjecture (see the previous subsection) we can recognize any of the Hassett spaces appearing in Theorem \ref{thm:contraction} as (the normalization of) the inverse limit of an appropriate family of $\mathtt{SL}_3$ GIT quotients.

\begin{proposition}
Fix $\vec{c}\in (0,1]^n\cap\mathbb{Q}^n$ with $2 < c < 3$ and write $\vec{c'} \le \vec{c}$ if $c'_i \le c_i$ for $i=1,\ldots,n$.  Assuming the $F$-conjecture, the normalization of the inverse limit of all quotients $\mathtt{Con}(n)\quotient_{(\gamma',\vec{c'})}\mathtt{SL}_3$ such that $\vec{c'} \le \vec{c}$ and $2 < \gamma' = 3 - c'$ is $\overline{\mathcal{M}}_{0,\vec{c}}$.
\end{proposition}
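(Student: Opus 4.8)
The plan is to exhibit $\overline{\mathcal{M}}_{0,\vec{c}}$ as the normalization of the inverse limit by constructing a natural birational morphism into the limit and proving it is injective on closed points. For each admissible weighting $\vec{c}'\le\vec{c}$ with $0<\gamma'=3-c'<1$, Hassett's reduction morphism $\overline{\mathcal{M}}_{0,\vec{c}}\to\overline{\mathcal{M}}_{0,\vec{c}'}$ exists because the weights only decrease, and composing it with the contraction $\overline{\mathcal{M}}_{0,\vec{c}'}\to\mathtt{Con}(n)\quotient_{(\gamma',\vec{c}')}\mathtt{SL}_3$ of Theorem \ref{thm:contraction} yields a compatible system of birational morphisms $f_{\vec{c}'}$ out of $\overline{\mathcal{M}}_{0,\vec{c}}$. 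These assemble into a morphism $\Phi$ from $\overline{\mathcal{M}}_{0,\vec{c}}$ to the inverse limit $W$, and since $\overline{\mathcal{M}}_{0,\vec{c}}$ is proper and irreducible, $\Phi$ is surjective and birational onto the main component of $W$. As $\overline{\mathcal{M}}_{0,\vec{c}}$ is normal, the normalization of $W$ will be identified with $\overline{\mathcal{M}}_{0,\vec{c}}$ as soon as $\Phi$ is shown to be injective on closed points, by Zariski's Main Theorem.

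To prove injectivity I would produce a single ample class on $\overline{\mathcal{M}}_{0,\vec{c}}$ from the family. If some finite positive combination of the pullbacks $f_{\vec{c}'}^{*}A_{\vec{c}'}$, with $A_{\vec{c}'}$ ample on the corresponding quotient, is ample, then the associated morphism to a finite product of quotients is proper and quasi-finite, hence finite, and therefore $\Phi$ is injective. Granting the $F$-conjecture, ampleness on $\overline{\mathcal{M}}_{0,\vec{c}}$ can be tested on the finitely many $F$-curve classes, so it suffices to find, for each $F$-curve that survives as a curve in $\overline{\mathcal{M}}_{0,\vec{c}}$, one member $\vec{c}'$ of the family on which that curve is not contracted; the sum of the finitely many chosen pullbacks is then strictly positive on every surviving $F$-curve and nonnegative on the rest, hence ample. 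Recall from the discussion preceding Lemma \ref{F-curves} that an $F$-curve with parts $N_1,\dots,N_4$ survives in $\overline{\mathcal{M}}_{0,\vec{c}}$ precisely when $x_1+x_2+x_3>1$, where $x_k=\sum_{i\in N_k}c_i$ is taken increasing in $k$; the inclusion that any curve already contracted in $\overline{\mathcal{M}}_{0,\vec{c}}$ is contracted in every quotient is immediate from the factorization $\overline{\mathcal{M}}_{0,\vec{c}}\to\overline{\mathcal{M}}_{0,\vec{c}'}\to\mathtt{Con}(n)\quotient\mathtt{SL}_3$.

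The crux is therefore the converse combinatorial construction: given a partition with $x_1+x_2+x_3>1$, I must exhibit $\vec{c}'\le\vec{c}$ with $2<c'<3$ for which Lemma \ref{F-curves} leaves the curve uncontracted, that is $x_1'+x_2'+x_3'>1$ and $x_3'\le1$, where $x_k'=\sum_{i\in N_k}c_i'$. The strategy is to lower the weights only on the two heaviest legs, pushing the second-largest leg-weight down to at most $1$ while retaining more than $1$ of total weight on the three lightest legs and keeping the grand total above $2$; the hypotheses $x_1+x_2+x_3>1$ and $c>2$ provide exactly the slack needed, and one verifies the finitely many cases according to the ordering of the $x_k$. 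I expect this construction, together with the continuity or limiting arguments required at the boundary of the weight region---where $c'\to2$, $c'\to3$, or some $c_i'\to0$ pushes the chosen linearization onto a wall excluded by Theorem \ref{thm:contraction} or Lemma \ref{F-curves}---to be the main obstacle; the identification of the normalized limit via Zariski's Main Theorem is then routine bookkeeping. Combining the two directions shows that the surviving $F$-curves of $\overline{\mathcal{M}}_{0,\vec{c}}$ are exactly those left uncontracted by some member of the family, yielding the ample combination and hence the desired identification of the normalized inverse limit with $\overline{\mathcal{M}}_{0,\vec{c}}$.
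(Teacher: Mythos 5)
Your proposal follows essentially the same route as the paper: map $\overline{\mathcal{M}}_{0,\vec{c}}$ compatibly into the inverse limit and show that every $F$-curve surviving in $\overline{\mathcal{M}}_{0,\vec{c}}$ (i.e.\ with $x_1+x_2+x_3>1$) fails to be contracted by some member of the family --- which is exactly the paper's key step of lowering the weights on the leg $N_3$ until $x_3'=1$ while keeping $x_1'+x_2'+x_3'>1$ and $c'>2$; the paper merely packages this as a contradiction (a curve contracted by $\overline{\mathcal{M}}_{0,\vec{c}}\to\widetilde{\mathcal{L}}$ would have to be an $F$-curve contracted by every quotient) rather than as your equivalent ample-combination argument. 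One small correction: finiteness of the map to a product of quotients does not by itself give injectivity, but what you actually need for the Zariski Main Theorem identification is finiteness plus birationality onto the main component plus normality of $\overline{\mathcal{M}}_{0,\vec{c}}$, all of which your argument supplies.
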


\begin{remark}
We do not need the full $F$-conjecture to prove this, only a weaker form which asserts that $F$-curves generate the cone of curves for Hassett spaces $\overline{\mathcal{M}}_{0,\vec{c}}$ satisfying $2 < c < 3$.  In (\cite{Simp}, Theorem 3.3.2) this ``weak'' $F$-conjecture is proven for symmetric weights in this range, so in such cases the above proposition is unconditional.  
\end{remark}

\begin{proof}
By Theorem \ref{thm:contraction} there is a birational morphism $\overline{\mathcal{M}}_{0,\vec{c'}} \rightarrow \mathtt{Con}(n)\quotient_{(\gamma',\vec{c'})}\mathtt{SL}_3$ for any $\vec{c'} \le \vec{c}$, so by composing with the reduction morphisms $\overline{\mathcal{M}}_{0,\vec{c}} \rightarrow \overline{\mathcal{M}}_{0,\vec{c'}}$ we get a birational morphism from $\overline{\mathcal{M}}_{0,\vec{c'}}$ to any of the GIT quotients described in the proposition---and hence to their inverse limit $\mathcal{L}$.  This inverse limit may not be normal, but since all the Hassett spaces are normal this induced morphism factors through the normalization $\widetilde{\mathcal{L}} \rightarrow \mathcal{L}$.  We claim the morphism $\overline{\mathcal{M}}_{0,\vec{c}} \rightarrow \widetilde{\mathcal{L}}$ is an isomorphism.

If it were not then it would have to contract a curve, and because we are assuming the $F$-conjecture this means it would have to contract an $F$-curve.  This $F$-curve corresponds to a partition $N_1\cup\cdots\cup N_4 = \{1,\ldots,n\}$ and as above we write $x_k = \sum_{i\in N_k} c_i$ and assume without loss of generality $x_1 \le \cdots \le x_4$.  For this $F$-curve to be contracted by $\overline{\mathcal{M}}_{0,\vec{c}} \rightarrow \widetilde{\mathcal{L}}$ it must not be contracted by $\overline{\mathcal{M}}_{0,n} \rightarrow \overline{\mathcal{M}}_{0,\vec{c}}$, so $x_1 + x_2 + x_3 > 1$.  Now the morphisms $\overline{\mathcal{M}}_{0,\vec{c}} \rightarrow \mathtt{Con}(n)\quotient_{(\gamma',\vec{c'})}\mathtt{SL}_3$ all factor through $\widetilde{\mathcal{L}}$ so they must \emph{all} contract this same $F$-curve.  By Lemma \ref{F-curves} this means in particular, taking $\vec{c'}=\vec{c}$, that $x_3 > 1$ (and hence $x_2 < 1$ since $\sum_{k=1}^4 x_k < 3$).  Choose $\vec{c'}\le\vec{c}$ such that $x'_3 = 1$, where $x'_k := \sum_{i\in N_k}c'_i$.  We still have $x'_1 \le x'_2 \le x'_3 \le x'_4$ since $x'_2 = x_2 < 1 = x'_3 < x_3 \le x_4 = x'_4$, and we certainly have $x'_1+x'_2+x'_3 > 1$ since $x'_3 =1$ and $x'_1 > 0$, and of course $\sum_{k=1}^4 x'_k > 2$, so by Lemma \ref{F-curves} this $F$-curve is \emph{not} contracted by $\overline{\mathcal{M}}_{0,\vec{c}} \rightarrow \mathtt{Con}(n)\quotient_{(\gamma',\vec{c'})}\mathtt{SL}_3$, a contradiction.  Therefore $\overline{\mathcal{M}}_{0,\vec{c}} \rightarrow \widetilde{\mathcal{L}}$ must in fact be an isomorphism.
\end{proof}

\subsection{GIT Cones}

If $X$ is a projective variety acted upon by a reductive group $G$, and $L$ is any $G$-linearized ample line bundle on $X$, then the GIT quotient may be defined as $X\quotient_L G := \mathtt{Proj}(\oplus_{m\ge 0}\Gamma(X,L^{\otimes m})^G)$.  This perspective makes it clear that the quotient comes equipped with a distinguished polarization, i.e. an ample line bundle $L'$.  If $\phi : Y \rightarrow X\quotient_L G$ is any morphism then the pull-backed line bundle $\phi^*L'$ on $Y$ is nef.  Understanding the nef cone of a variety is a crucial part of understanding its birational geometry, so Alexeev and Swinarski (\cite{AS}) used this GIT setup to study the nef cone of $\overline{\mathcal{M}}_{0,n}$.  

Specifically, each of the quotients $(\mathbb{P}^1)^n\quotient_L\mathtt{SL}_2$ comes with a distinguished polarization $L'$ determined by $L\in\Delta(2,n)$ so pulling back along the Kapranov morphisms $\overline{\mathcal{M}}_{0,n} \rightarrow (\mathbb{P}^1)^n\quotient_L\mathtt{SL}_2$ produces a collection of nef line bundles on $\overline{\mathcal{M}}_{0,n}$ which Alexeev and Swinarski term the \emph{GIT cone}.  For $n=5$ this GIT cone coincides with the full nef cone; for $n \ge 6$ it is strictly smaller, though still a useful object.

Theorem \ref{thm:contraction} allows one to setup an analogous framework for $\mathtt{SL}_3$ quotients.  That is, as one varies the linearization $L\in\Delta(3,n+1)$ one can pull back the distinguished polarization on $\mathtt{Con}(n)\quotient_L\mathtt{SL}_3$ along the morphism $\overline{\mathcal{M}}_{0,n} \rightarrow \mathtt{Con}(n)\quotient_L\mathtt{SL}_3$, thereby producing a collection of nef line bundles on $\overline{\mathcal{M}}_{0,n}$ which we term the \emph{second order GIT cone}.  The set of line bundles coming from linearizations on the face $\gamma = 1$ of $\Delta(3,n+1)$ coincide with the first order GIT cone defined in \cite{AS}, so by allowing arbitrary $\gamma$ we except to find a strictly larger GIT cone.  This has not been explored much yet; we leave it as an open area of study.

\end{document}